\numberwithin{equation}{section}
\definecolor{purple}{rgb}{0.9,0,0.8}
\definecolor{gray}{rgb}{0.7,0.7,0.7}
\newcommand{\abbr}[1]{{\sc\lowercase{#1}}}
\newtheorem{thm}{Theorem}[section]
\newtheorem{lem}[thm]{Lemma}
\newtheorem{ppn}[thm]{Proposition}
\newtheorem{cnj}[thm]{Conjecture}
\theoremstyle{definition}
\newtheorem{defn}[thm]{Definition}
\newtheorem{remark}[thm]{Remark}
\newcommand{\beq}{\begin{equation}}
\newcommand{\eeq}{\end{equation}}
\newcommand{\ep}{\epsilon}
\newcommand{\bA}{\mathbb{A}}
\newcommand{\B}{\mathbb{B}}
\newcommand{\bB}{\mathbb{B}}
\newcommand{\D}{\mathbb{D}}
\newcommand{\bD}{\mathbb{D}}
\newcommand{\E}{\mathbb{E}}
\newcommand{\bE}{\mathbb{E}}
\newcommand{\bG}{\mathbb{G}}
\newcommand{\G}{\mathbb{G}}
\newcommand{\bI}{\mathbb{I}}
\newcommand{\J}{\mathbb{J}}
\newcommand{\bK}{\mathbb{K}}
	\renewcommand{\L}{\mathbb{L}}
	\renewcommand{\P}{\mathbb{P}}	
\newcommand{\bP}{\mathbb{P}}
\newcommand{\Z}{\mathbb{Z}}
\newcommand{\bZ}{\mathbb{Z}}
\renewcommand{\emptyset}{\varnothing}
\renewcommand{\setminus}{\backslash}
\begin{document}

\title[Monotone Interaction of Walk and Graph: Recurrence versus Transience ]
{Monotone Interaction of Walk and Graph:  \\Recurrence versus Transience}
\date{\today}

\author[A.\ Dembo]{Amir Dembo$^*$}
\author[R.\ Huang]{Ruojun Huang$^\diamond$}
\author[V.\ Sidoravicius]{Vladas Sidoravicius$^\dagger$}
\address{$^*$Department of Mathematics, Stanford University, Building 380, 
Sloan Hall, Stanford, CA 94305, USA}
\address{$^{*\diamond}$Department of Statistics, Stanford University,
 Sequoia Hall, 390 Serra Mall, Stanford, CA 94305, USA}
\address{$^\dagger$IMPA, Estrada Dona Castorina 110, Jardim Botanico, 
Cep 22460-320, Rio de Janeiro, RJ, Brazil}

\thanks{This research was supported in part by NSF 
grant DMS-1106627,
by Brazilian CNPq grants 308787/2011-0 and  476756/2012-0, 
Faperj grant E-26/102.878/2012-BBP
and by ESF RGLIS Excellence Network.}

\subjclass[2000]{Primary 60K35; Secondary 82C41, 60G50.}

\keywords{Recurrence, interacting particle system, random walk on growing domains.}

\maketitle
\begin{abstract} We consider recurrence versus 
transience for models of random walks on 
domains of $\mathbb{Z}^d$, in which monotone 
interaction enforces domain growth as a result 
of visits by the walk (or probes it sent), 
to the neighborhood of domain boundary.
\end{abstract}

\section{Introduction}
There has been much interest in studies of 
random walks in random environment 
(see \cite{HMZ}). Of particular 
challenge are problems in which the walker 
affects its environment, as in reinforced random walks. In this context even the most fundamental question of recurrence versus transience is often open. 
For example, the recurrence of two dimensional 
linearly reinforced random walk with large enough reinforcement strength has been recently solved 
in \cite{ACK}, \cite{ST}. The corresponding question 
by M. Keane for once reinforced random walk remains 
open. Moving to $\mathbb{Z}^d$, $d\ge3$, the 
recurrence of once reinforced random walk 
is conjectured to be sensitive to the strength 
of the reinforcement (see e.g. \cite{K1}).  
We consider here certain time-varying, highly 
non-reversible evolutions. 
Specifically, similarly to \cite{DHS}
we study discrete 
time simple random walk (\abbr{srw}) $\{ X_t\}$  
on connected graphs 
$\bG_t \uparrow \bG_\infty \subseteq \overline{\bG}$ (for some 
given, locally finite, connected graph $\overline{\bG}$,
adopting the notation $\bD_t$ in case 
$\overline{\bG} = \mathbb{Z}^d$).
That is, starting at given $\bG_0$ and initial site
$X_0 \in \bG_0$, the sequence $\{X_t\}$ is adapted to
some filtration $\{\mathcal{F}_t\}$ with 
$\{\bG_t\}$ being $\mathcal{F}_\cdot$-previsible
(most often using $\mathcal{F}_t=\sigma(X_s, s \le t)$
the canonical filtration of the \abbr{srw}), and having 
$X_t = x$, one chooses $X_{t+1}$ uniformly among all 
neighbors of $x$ within $\bG_t$.

Our companion paper \cite{DHS} deals with 
$\{\bG_t\}$ growing independently of $\{X_t\}$, 
a situation in which universality is to be 
expected (c.f. Conjectures 
1.1-1.3 of \cite{DHS} and the analogous 
conjectures made in \cite{ABGK} for the 
corresponding strictly positive and finite 
conductances model). In contrast, rich and often
counter intuitive behavior occurs when focusing 
on genuine monotone interaction between 
the path $\{X_0,\ldots,X_t\}$ of the walk and 
the growth $\bG_{t+1} \setminus \bG_t$ of the
graphs. In this context, our
Lemma \ref{thm0} provides an 
equivalent condition for transience/reccurence 
of the \abbr{srw} on growing $\{\G_t\}$. We
examine here its consequences for various 
monotone interactions. In particular, in 
{\emph{open by touch}} type interactions
a local or bounded number of edges 
is added to $\bG_t$ as result of each visit 
to its boundary sites (see Defn. \ref{def:obt}).
We then expect the \abbr{srw} on $\G_t$ to inherit 
the transience of $\overline{\G}$ when
starting at large enough $\G_0$ (see
Prop. \ref{obstacles} and Conjecture 
\ref{cnj:perc-sup-crit}), 
while it should be recurrent when $\G_0$ is 
small and $\G_t$ almost regular (see 
Defn. \ref{def:ars} and Prop. \ref{conditional},
but beware the counter example of transience
provided in Prop. \ref{biased_opening}). 
This recurrence should be related to such interaction 
requiring order of surface-area visits to the 
graph's boundary. In the same direction we find
sharp transition between transience and recurrence 
at lower than 
surface-area growth of the number of boundary visits
for {\emph{expanding glassy spheres}} 
type interactions. These are 
of almost regular shape due to global growth 
upon completion of the required number of visits
to the current graph's boundary, see Defn. \ref{def:egs}
and Prop. \ref{glassy}. 
Finally, we consider 
{\emph{probing simple random walk}} where  
a variable/fixed number of guided/unguided probes 
is sent from walker's current location, with 
each probe adding a site at the graph's boundary. 
In this setting 
guided probes may flip the walk between 
transience and recurrence, whereas for 
unguided probes the \abbr{srw} supposedly 
inherits the transience/recurrence of the underlying 
graph $\mathbb{Z}^d$ (see Prop. \ref{general_soldier} and 
Conjecture \ref{cnj:fixed-unguided-probes}). 

\medskip
Recall that for any time-homogeneous Markov chain $\{Z_t\}$ on countable state space $\overline{\bG}$, 
a zero-one law applies for the recurrence of state $z \in \overline{\bG}$, namely for the event 
$\{N_z = \infty \}$ and 
$N_z:=\sum_t \bI_{\{z\}} (Z_t)$. Further, such 
recurrence, i.e. $\bP_z(N_z=\infty)=1$, is 
equivalent to $\bE_z(N_z)=\infty$ and to 
$\bP_z(Z_t=z$ for some $t)=1$. In contrast,
neither such equivalence, nor zero-one law apply in
our more general setting of monotonically interacting 
\abbr{srw} on growing graphs. For example, both zero-one 
law and equivalence break for suitable choices of $\{1,\ldots,\infty\}$-valued random variable $K$, taking $\bG_t$, $t \le K$ 
a single edge adjacent to the origin and 
$\bG_t=\bZ^3$ for $t>K$.
This prompts our selection hereafter of the following definition of sample-path recurrence.
%
\begin{defn}\label{def:rec}
A site $x \in \overline{\bG}$ is recurrent for the
sample path of \abbr{srw} $\{X_t\}$ on $\{\bG_t\}$ 
if $\{X_t = x$ i.o$\}$. Otherwise we say that 
the site $x \in \overline{\bG}$ is transient for 
this sample path of the \abbr{srw} on $\{\bG_t\}$. 
\end{defn}

Let $\text{deg}_{\bG}(z)$
denote the degree of vertex $z$ in graph $\bG$, 
$d^{\bG}(x,y)$ the graph distance in $\bG$ between $x,y\in \bG$ and $\bB^{\bG}(z,r)$
the corresponding (closed) ball of radius $r$ and
center $z$ in $(\bG,d^{\bG})$, with $\bB_r$ 
denoting the projection on $\bZ^d$ of the (closed) 
Euclidean ball of radius $r$ centered at 
the origin. Hereafter, we set $X_0=0$, assuming   
that $\bB^{\overline{\bG}}(0,1) \subset \bG_0$,
and in view of the following lemma,  
focus without loss of generality on 
sample path recurrence of this distinguished vertex. 

\begin{lem}\label{lem:irred}
For any \abbr{srw} $\{X_t\}$ on monotone increasing 
connected graphs $\{\bG_t\}$, on the event 
$\{X_t=0,\text{ f.o.}\}$ of transience of $0$
we have that a.s. $d^{\bG_t}(0,X_t) \to \infty$ 
when $t \to \infty$. Consequently, with probability one,
$0$ is transient for the sample path of the \abbr{srw}
if and only if every vertex of $\overline{\bG}$ is 
transient for this path.  
\end{lem}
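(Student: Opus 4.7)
The plan is to prove the first assertion---that transience of $0$ forces $d^{\bG_t}(0,X_t) \to \infty$---by contraposition: whenever $\liminf_t d^{\bG_t}(0,X_t) < \infty$, I would show that $0$ is visited infinitely often almost surely. Writing $\{d^{\bG_t}(0,X_t) \not\to \infty\} = \bigcup_{R \in \N} A_R$ with $A_R := \{d^{\bG_t}(0,X_t) \le R \text{ i.o.}\}$, it suffices to verify $A_R \subseteq \{X_t = 0 \text{ i.o.}\}$ up to a null set, for each fixed $R$.

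The quantitative heart of the argument is a uniform local hitting estimate. Fix $R$, set $B := \bB^{\overline{\bG}}(0, R)$---finite since $\overline{\bG}$ is locally finite---and let $D := \max_{z \in B} \text{deg}_{\overline{\bG}}(z)$. Whenever $d^{\bG_t}(0, X_t) \le R$, connectedness of $\bG_t$ furnishes a path $X_t = z_0, z_1, \ldots, z_k = 0$ in $\bG_t$ with $k \le R$ and all $z_i \in B$; by monotonicity $\bG_t \subseteq \bG_s$ for $s \ge t$, each edge $\{z_i, z_{i+1}\}$ remains available at subsequent times, and $\text{deg}_{\bG_s}(z_i) \le D$ for every $s$. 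Thus the walk has conditional probability at least $\ep_R := D^{-R}$ of traversing this path to $0$ within $R$ steps:
\[
\P\bigl(X_{t+j} = 0 \text{ for some } j \in \{1,\ldots,R\} \,\big|\, \mathcal{F}_t\bigr) \ge \ep_R \quad \text{on } \{d^{\bG_t}(0, X_t) \le R\}.
\]

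To finish the first assertion, on $A_R$ I would inductively define stopping times $\sigma_0 < \sigma_1 < \cdots$ by $\sigma_{j+1} := \inf\{t > \sigma_j + R : d^{\bG_t}(0, X_t) \le R\}$, all a.s.\ finite on $A_R$, producing disjoint time windows each contributing conditional probability at least $\ep_R$ of a visit to $0$; a conditional second Borel-Cantelli lemma applied along the sub-filtration $(\mathcal{F}_{\sigma_j + R})$ then yields $X_t = 0$ infinitely often on $A_R$. The second assertion is then an immediate corollary: any $y \in \overline{\bG}$ either never enters $\bG_\infty$ (and is hence never visited), or eventually $y \in \bG_t$, in which case $r_y := \lim_t d^{\bG_t}(0, y)$ is a finite integer (the sequence being integer-valued and non-increasing, so eventually constant), and $\{X_t = y \text{ i.o.}\} \subseteq \{d^{\bG_t}(0, X_t) \le r_y \text{ i.o.}\}$ contradicts $d^{\bG_t}(0, X_t) \to \infty$ on $\{X_t = 0 \text{ f.o.}\}$. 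The main conceptual obstacle is that $\bG_t$ grows with $t$, so degrees at vertices near $0$ could in principle also grow, diluting the backtracking probability along any fixed short path; local finiteness of $\overline{\bG}$ is precisely what rules this out by supplying the uniform-in-$t$ degree bound $D$ on the finite ball $B$.
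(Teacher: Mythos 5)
Your proposal is correct and follows essentially the same route as the paper: the identical key estimate (a geodesic of length at most $R$ stays in the finite ball $\bB^{\overline{\bG}}(0,R)$, where degrees are uniformly bounded by local finiteness, giving conditional hitting probability at least $D^{-R}$), followed by a L\'evy/conditional Borel--Cantelli upgrade from ``infinitely often near $0$'' to ``infinitely often at $0$''. The paper applies L\'evy's upward theorem directly to the indicators $\bI_{A_t}$ rather than extracting stopping times, but this is only a cosmetic difference.
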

\begin{proof} Fixing 
$r$ finite, let 
$A_t=\{X_{t+u}=0$, some $u \ge 0\}$ and
$\Gamma_{t,r}=\{d^{\bG_t}(0,X_t) \le r\}$. Since
$X_0=0$, and $\bG_t \subseteq \overline{\bG}$ 
are non-decreasing (so in particular
$\bB^{\bG_t}(0,r)\subseteq \bB^{\overline{\bG}}(0,r)$, 
for any $t \ge 0$), and 
$\overline{\bG}$ is locally finite, 
it follows that
$$
\bE_{X_t}(\bI_{A_t}|\mathcal{F}_t) \ge M_r^{-r} 
\mathbb{I}_{\Gamma_{t,r}} \,,
$$
with 
$M_r:=\max_{z\in\bB^{\overline{\bG}}(0,r)}\{\text{deg}_{\overline{\bG}}(z)\}$ finite. 
When $t \to \infty$ we have that 
$\mathbb{I}_{A_t} \to \mathbb{I}_{\{X_t=0\;\; i.o.\}}$
and 
$$
\liminf_{t \to \infty} \mathbb{I}_{\Gamma_{t,r}} 
= \mathbb{I}_{\{X_t \in \bB^{\bG_t}(0,r) \; i.o.\}}
\,,
$$
hence by L\'evy's upward theorem, w.p.1. 
if $\{X_t \in \bB^{\bG_t}(0,r)$ i.o.$\}$ then 
$\{X_t=0\text{ i.o.}\}$.
Taking $r\to \infty$ we deduce that w.p.1. transience at $0$ of the sample path implies finitely many visits of 
$X_t$ to $\bB^{\bG_t}(0,r)$ for each $r$, hence both transience
of every $x \in \overline{\bG}$ for this sample path 
and that $d^{\bG_t}(0,X_t) \to \infty$ when $t \to \infty$. 
\end{proof}

Next, with $\text{deg}_{\overline{\bG}}(z)$ the 
maximal possible degree of vertex $z$, 
we define the boundary set 
$$
\partial\bG_t:=\{z\in\bG_t: \text{deg}_{\bG_t}(z)<\text{deg}_{\overline{\bG}}(z)\}\,,
$$
of $\bG_t$ (consisting of all vertices of 
$\bG_t$ whose degree may yet change as $\bG_t\uparrow\bG_\infty$), 
and characterize transience via summability of 
$p_n:=\mathbb{P}(A_n|\mathcal{F}_{\eta_n})$,
for events 
$$
A_n:=\{\exists s\in[\eta_n,\sigma_n): X_s=0\}\,,
$$
and the following 
$\mathcal{F}_t$-stopping times 
$\{\eta_n,\sigma_n\}$, starting at $\eta_0=0$:
\begin{align*}
\sigma_n:=&\inf\{t\ge\eta_n: X_t\in\partial\bG_{\eta_n}\},\quad n\ge0\\
\eta_{n+1}:=&\inf\{t\ge\sigma_n: X_t\not\in\partial\bG_t\}.
\end{align*}
\begin{lem} \label{thm0}
Let $S:= \sum_n p_n$.
\newline
(a) The sample path of $\{X_t\}$ is a.s. recurrent
on $S=\infty$;\\
(b) Conversely, if \abbr{srw} on the fixed graph $\overline{\bG}$ is transient, then 
the sample path of $\{X_t\}$ is a.s. transient
on $S<\infty$.
\end{lem}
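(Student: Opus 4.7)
Both parts will follow from L\'evy's conditional Borel--Cantelli lemma applied to $\{A_n\}$ in the filtration $\{\mathcal{F}_{\eta_n}\}$, once two structural observations about the excursion decomposition are in place. First, the standing hypothesis $\bB^{\overline{\bG}}(0,1) \subset \bG_0$ forces $0$ to already have its maximal $\overline{\bG}$-degree in $\bG_0$, so $0 \notin \partial \bG_t$ for every $t \geq 0$. Since by definition $X_s \in \partial \bG_s$ throughout each sub-interval $[\sigma_n, \eta_{n+1})$, any visit of the walk to $0$ must fall in some ``interior excursion'' $[\eta_n, \sigma_n)$ and thereby corresponds to $A_n$ occurring. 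Second, for $s \in [\eta_n, \sigma_n)$ we have $X_s \notin \partial \bG_{\eta_n}$, so $\text{deg}_{\bG_{\eta_n}}(X_s) = \text{deg}_{\overline{\bG}}(X_s)$ and every $\overline{\bG}$-neighbor of $X_s$ already lies in $\bG_{\eta_n} \subseteq \bG_s$. Hence during each interior excursion the walker performs \abbr{srw} on the fixed graph $\overline{\bG}$ until it enters the frozen boundary $\partial \bG_{\eta_n}$.

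With these observations, $\mathcal{G}_n := \mathcal{F}_{\eta_{n+1}}$ is an increasing filtration, $A_n \in \mathcal{F}_{\sigma_n} \subset \mathcal{G}_n$, and $\mathbb{P}(A_n \mid \mathcal{G}_{n-1}) = \mathbb{P}(A_n \mid \mathcal{F}_{\eta_n}) = p_n$ by construction. L\'evy's extension of the second Borel--Cantelli lemma then yields $\{A_n \text{ i.o.}\} = \{S = \infty\}$ up to a null set, from which part (a) is immediate: on $\{S = \infty\}$ infinitely many $A_n$ occur, each contributing a zero-visit, so the sample path is recurrent at $0$.

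For part (b), on $\{S < \infty\}$ only finitely many $A_n$ occur, and I distinguish three sub-cases according to the finiteness of the stopping-time sequence. If every $\eta_n$ and every $\sigma_n$ is finite, then the finitely many $A_n$-intervals are each of finite length and contribute only finitely many zero-visits in total. If $\eta_N = \infty$ for some minimal $N$, then $X_t \in \partial \bG_t$ for every $t \geq \sigma_{N-1}$, and the first observation above precludes any further return to $0$. Finally, if $\sigma_N = \infty$ for some minimal $N$ (with $\eta_N < \infty$), the second observation identifies $\{X_{\eta_N + k}\}_{k \geq 0}$ with a bona fide \abbr{srw} trajectory on the fixed graph $\overline{\bG}$ started from $X_{\eta_N}$ --- extend the dynamics past the never-reached boundary by the uniform-neighbor rule on $\overline{\bG}$, and the extension is forced to agree with $\{X_t\}$ on this event --- so the assumed transience of $\overline{\bG}$ gives finitely many zero-visits after $\eta_N$ almost surely.

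The main obstacle is this last sub-case: on $\{\sigma_N = \infty\}$ one must invoke transience of $\overline{\bG}$ via a dynamical extension of the walk rather than by conditioning on a possibly complicated event, since a priori conditioning on $\{\sigma_N = \infty\}$ need not preserve the \abbr{srw} law. The remainder of the argument is bookkeeping driven by the excursion decomposition and the two structural observations above.
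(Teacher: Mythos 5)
Your proof is correct and follows essentially the same route as the paper: L\'evy's conditional Borel--Cantelli applied to $\{A_n\}$ along $\{\mathcal{F}_{\eta_n}\}$ for part (a), and for part (b) the observations that $0\notin\partial\bG_t$ (so all zero-visits lie in the intervals $[\eta_n,\sigma_n)$) together with the fact that the walk is a bona fide \abbr{srw} on the fixed graph during each such interval. The only (immaterial) variation is in the $\sigma_N=\infty$ sub-case, where you invoke transience of $\overline{\bG}$ directly via a coupled extension of the walk, while the paper argues that an irreducible \abbr{srw} on the connected graph $\bG_{\eta_N}$ visiting $0$ infinitely often would a.s. hit the non-empty $\partial\bG_{\eta_N}$, reserving the transience hypothesis for the case $\partial\bG_{\eta_N}=\emptyset$.
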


\begin{remark}\label{rmk:thm0}
In \cite[Sections 4,5]{ABGK} it is shown 
that a monotonically interacting strictly positive and 
finite conductance model on a tree $\overline{\bG}$ 
tends to follow the recurrence/transience of its 
starting and ending conductances (in particular, this 
applies for $\overline{\bG}=\bZ$). However, this 
approach, based on using flows to construct suitable 
sub or super martingales, is limited in scope to trees
(indeed \cite[Section 6]{ABGK} provides a counter example
to such conclusion in case $\overline{\bG}=\bZ^2$). 
In contrast, while less explicit, Lemma \ref{thm0} 
applies for any $\overline{\bG}$. Further, the 
advantage of this lemma lies in $p_n$ being the probability that 
a \abbr{SRW} on fixed graph $\overline{\bG}$  
starting at the random position $X_{\eta_n}$ 
visits $0$ before $\partial\bG_{\eta_n}$, 
hence amenable to the use of classical hitting 
probability estimates for random walk on a fixed graph.
\end{remark}

\begin{proof} Recall Paul L\'evy's extension of 
Borel-Cantelli lemma (see \cite[Theorem 5.3.2]{Du}), 
that a.s. $S=\infty$ if and only if $\{A_n$ i.o.$\}$
which immediately yields part (a). Further, 
$\text{deg}_{\bG_t}(0) = \text{deg}_{\overline{\bG}}(0)$
for all $t$ (by our assumption that 
$\bB^{\overline{\bG}}(0,1) \subset \bG_0$), hence 
$X_s \ne 0$ whenever $s \in [\sigma_n,\eta_n)$ and 
the a.s. transience of $0$ for $\{X_t\}$ 
in case $S<\infty$ follows, provided $\sigma_n<\infty$ for all $n$.
To rule out having with positive probability 
$\{\sigma_n=\infty$ and $X_t=0$ for 
infinitely many $t \ge \eta_n\}$, note that by our assumption 
of transience of the \abbr{srw} on $\overline{\bG}$, 
the former can not occur if $\partial\bG_{\eta_n} = \emptyset$.
So, assuming hereafter that $\partial\bG_{\eta_n}$ is 
non-empty, conditional on $\mathcal{F}_{\eta_n}$, 
if the irreducible \abbr{srw} on the fixed connected 
graph $\bG_{\eta_n}$ visits $0$ i.o., then it a.s. would   
also enter $\partial\bG_{\eta_n}$ in finite time, namely 
having $\sigma_n<\infty$.  
\end{proof}

Of particular interest to us are the 
{\emph{open by touch}} type interaction models,
in which graph growth occurs only upon 
the walker's visits of the graph's boundary sites.
\begin{defn}\label{def:obt}
We say that $Y_t \in \bG_t\uparrow\bG_\infty 
\subseteq \overline{\bG}$ is an open by touch (\abbr{obt}) 
interaction model, if $\bG_{t+1}=\bG_t$
except when $Y_t\in\partial \bG_t$, at which times 
all edges of
$\bB^{\overline{\bG}}(Y_t,1)$ are added to $\bG_{t+1}$.
More generally, in a {\emph{partially open by touch}}
(\abbr{pobt}) interaction we add to $\bG_{t+1}$, 
with uniformly bounded away from zero probability,
one (or more) of the edges adjacent to 
$Y_t \in \partial \bG_t$, in a {\emph{first open by touch}}
(\abbr{fobt}) interaction such addition of edges 
adjacent to $x \in \partial \bG_t$ occurs 
{\emph{only at the first}} visit of $x$ by $Y_t$,
whereas in a 
{\emph{remotely open by touch}} (\abbr{robt}) 
we only require that the collection 
of edges $\bA_t$ added to $\bG_t$
when $Y_t \in \partial \bG_t$ be of uniformly 
bounded cardinality.
\end{defn} 

In any \abbr{obt} model the walker opens 
with a one step delay all edges of $\overline{\bG}$ adjacent to 
its current position, in effect performing \abbr{srw} on $\overline{\bG}$, 
except at her first visit to certain sites. Thus, one may expect 
that \abbr{obt} interactions inherit the transience of \abbr{srw} 
on $\overline{\bG}$, as long as they follow that \abbr{srw} 
update rule, except maybe when at $\partial \bG_t$. Indeed, 
we utilize such notion of {\emph{extended simple random walks}} 
when studying this question (in the sequel).
\begin{defn}\label{defn:esrw}
We say that a \abbr{robt} interaction model
on $\bG_t \uparrow \bG_\infty \subseteq
\overline{\bG}$ forms an extended simple random walk 
if $Y_t$ follows the steps of \abbr{srw} on 
$\overline{\bG}$ except for allowing 
whenever $Y_t \in \partial \bG_t$ to have 
any $\mathcal{F}_t$-measurable mechanism for 
choosing $Y_{t+1} \in \bG_t \cap \mathcal{C}(Y_t)$,
for some fixed 
$\mathcal{C}(x):=\bB^{\overline{\bG}}(x,r(x))$,
$c<1$ and $1 \le r(x) \le c d^{\overline{\bG}}(x,0)$.  
\end{defn}

Note however that in this context, \cite{K2}
shows that for transient $\overline{\bG}=\mathbb{Z}^d$, 
$d \ge 3$, starting at $Y_0=0$ and $\bG_0=\bB(0,1)$,
it is possible to have an 
\abbr{obt} extended simple random walk,
whose sample path is a.s. recurrent at $0$,
even with $r(x)=1$ everywhere. Specifically, this is 
done by creating drift to the origin at first 
visits, such that 
$\bE[Y_{t+1}-Y_t|\mathcal{F}_t] = -\delta Y_t/\|Y_t\|_1$
for some $\delta>0$ and all $Y_t \in \partial \bG_t$.
In contrast, with Lemma \ref{thm0}  
applicable for extended simple random walks, 
we next prove that $0$ is a.s. transient for the 
sample path in any \abbr{pobt} starting 
with $\bD_0 \subseteq \mathbb{Z}^d$, $d\ge3$ 
of fast enough diminishing density of closed edges. 
\begin{ppn} \label{obstacles}
If \abbr{srw} on $\overline{\bG}$ is transient, all 
sites (but not all edges), of $\overline{\bG}$ are in $\bG_0$ and  
\begin{align}\label{obt-diminish}
S_\star := \sum_{x\in\partial\bG_0}\sup_{y\in\mathcal{C}(x)} \{ \mathbb{P}_y(\text{\abbr{SRW} on }\overline{\bG} \text{ ever hits }0) \} <\infty, 
\end{align}
then almost every sample path in any \abbr{obt} 
extended simple random walk $Y_t$ 
on $\bG_t\uparrow\bG_\infty$ is transient. The 
same applies for any \abbr{pobt} provided   
$\overline{\bG}$ is of uniformly bounded degrees.
\newline
In particular, in case $\overline{\bG} = \mathbb{Z}^d$, $d\ge3$, 
denote by $N(k)$ the number of vertices in $\partial\bD_0$ that 
are on the boundary of the box of side length $k$ 
centered at $0$. Then w.p.1. the sample path of \abbr{pobt} extended simple random walk 
$Y_t$ on $\bD_t\uparrow\bD_\infty$ is transient, provided 
\begin{align}\label{obt-zd}
\sum_{k=1}^\infty N(k)k^{2-d}<\infty\,.
\end{align}
\end{ppn}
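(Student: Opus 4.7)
The plan is to apply Lemma~\ref{thm0}(b) by showing $S := \sum_n p_n$ is almost surely finite. As noted in Remark~\ref{rmk:thm0}, during $[\eta_n,\sigma_n)$ the walker avoids $\partial\bG_{\eta_n}$, and since the boundary is non-increasing ($\partial\bG_t\subseteq\partial\bG_{\eta_n}$ for $t\ge\eta_n$) the graph is frozen on this interval and $\{X_t\}$ coincides with \abbr{srw} on $\overline{\bG}$. Hence
$$
p_n \;\le\; \mathbb{P}_{X_{\eta_n}}\!\bigl(\text{\abbr{srw} on }\overline{\bG}\text{ ever hits }0\bigr).
$$
Since $X_{\sigma_{n-1}}\in\partial\bG_{\sigma_{n-1}}$, one has $\eta_n>\sigma_{n-1}$, so $V_n:=X_{\eta_n-1}\in\partial\bG_{\eta_n-1}\subseteq\partial\bG_0$ and the extended-\abbr{srw} mechanism at the boundary forces $X_{\eta_n}\in\mathcal{C}(V_n)$. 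Thus $p_n\le h(V_n)$, where
$$
h(x):=\sup_{y\in\mathcal{C}(x)}\mathbb{P}_y\!\bigl(\text{\abbr{srw} on }\overline{\bG}\text{ ever hits }0\bigr).
$$

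In the pure \abbr{obt} case, each visit to a boundary vertex opens all its incident edges at once and thereby removes that vertex from the boundary forever. Consequently $n\mapsto V_n$ is injective into $\partial\bG_0$, so $S\le\sum_{x\in\partial\bG_0}h(x)=S_\star<\infty$ a.s. For the \abbr{pobt} case under uniform degree bound $D$, let $N_x:=\#\{n:V_n=x\}$. Each such visit has conditional probability $\ge p_0>0$ of opening at least one of the (at most $D$) edges still missing at $x$, so $N_x$ is stochastically dominated by the waiting time for $D$ successes in i.i.d.\ $\mathrm{Bernoulli}(p_0)$ trials, giving $\mathbb{E}[N_x]\le D/p_0$. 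Thus
$$
\mathbb{E}\!\left[\sum_n p_n\right] \;\le\; \sum_{x\in\partial\bG_0}\mathbb{E}[N_x]\,h(x) \;\le\; \frac{D}{p_0}\,S_\star<\infty,
$$
so again $S<\infty$ a.s., and Lemma~\ref{thm0}(b) yields transience.

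For the $\mathbb{Z}^d$, $d\ge3$ specialization, I invoke the classical Green-function estimate $\mathbb{P}_y(\text{\abbr{srw} hits }0)\asymp|y|^{2-d}$; combined with $|y|\ge(1-c)|x|$ for $y\in\mathcal{C}(x)$ (since $r(x)\le c|x|$ with $c<1$), this yields $h(x)\lesssim|x|^{2-d}$. Grouping $x\in\partial\bD_0$ by the smallest side-$k$ box whose boundary contains $x$ then gives $S_\star\lesssim\sum_{k\ge1}N(k)\,k^{2-d}<\infty$, whence transience via the general statement. The points that require care are the boundary bookkeeping: verifying that the step $X_{\eta_n-1}\to X_{\eta_n}$ really is executed by the boundary mechanism (so $X_{\eta_n}\in\mathcal{C}(V_n)$), and in the \abbr{pobt} case that the per-visit $\ge p_0$ opening probabilities hold conditional on the full past in a form strong enough to justify $\mathbb{E}[N_x]\le D/p_0$ uniformly in $x$.
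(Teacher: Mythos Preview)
Your proposal is correct and follows essentially the same route as the paper: bound $p_n$ by $g(Y_{\eta_n-1})$ with $g=h$, use the \abbr{obt} rule to argue that the sites $\{Y_{\eta_n-1}\}$ are distinct elements of $\partial\bG_0$ (giving $S\le S_\star$), and in the \abbr{pobt} case bound the expected number of returns to each $x\in\partial\bG_0$ by ${\rm deg}_{\overline{\bG}}(x)/p_0$ before taking expectations. The $\bZ^d$ specialization via $|y|\ge(1-c)|x|$ and the Green-function decay is also the paper's argument, so there is nothing substantive to add.
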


\begin{remark}\label{rmk:conv-obt}
There is no analog of Proposition \ref{obstacles} 
for recurrent $\overline{\bG}$. For example, taking
$\overline{\bG}=\bZ^2$ with $Y_0=0$ in connected 
$\bG_0$ whose closed edges consist of exactly one 
among those touching sites $(\pm s_t,0)$ and
$(0,\pm s_t)$, for $s_t=[(1+c)^{t-1}]$, $t \ge 1$ 
and $c<1$ (hence of fast diminishing density), one 
can create \abbr{obt} extended simple random walk with 
only four sample path, $Y_t= (\pm s_t,0)$
or $Y_t=(0,\pm s_t)$, all of which are transient. 
\end{remark}
\begin{remark}
Proposition \ref{obstacles} applies 
regardless of the manner and probability in 
which edges are added to $\partial \bG_t$ in 
the \abbr{pobt} interaction, but this may be a 
somewhat delicate matter when starting with 
smaller graph $\bG_0$. For example,
walking on sub-domains $\bD_t$ of
the recurrent $\overline{\bG}=\bZ^2$, starting 
at $\bD_0=\bB (0,1)$, Proposition \ref{biased_opening} 
proves a.s. transience in \abbr{fobt} interaction 
for which only the right/up/down edges out of each site 
are added to $\bD_{t+1}$ (upon first visit to the site
by the \abbr{srw} on $\bD_t$).
\end{remark}

While Proposition \ref{obstacles} requires diminishing density 
of closed edges in $\bD_0 \subseteq \bZ^d$, $d \ge 3$, we 
believe that \abbr{srw} with \abbr{obt} interaction has 
a.s. transient sample path as soon as the open edges of 
$\bD_0$ percolate in $\bZ^d$. Specifically, we make the 
following conjecture.
\begin{cnj}\label{cnj:perc-sup-crit}
For the \abbr{obt} interaction in $\bZ^d$, $d \ge 3$, 
upon starting its \abbr{srw} at $0 \in \bD_0$, if $\bD_0$ 
is the infinite cluster of bond/site super-critical 
percolation, then the corresponding sample-path is a.s. 
transient.
\end{cnj}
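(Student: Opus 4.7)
The plan is to apply Lemma \ref{thm0}(b): since SRW on $\overline{\bG}=\bZ^d$ is transient for $d\ge 3$, it suffices to prove $S=\sum_n p_n<\infty$ almost surely. Note that Proposition \ref{obstacles} does not apply directly here, since by ergodicity of Bernoulli percolation the set $\partial \bD_0$ has positive density in $\bZ^d$, so the sum $S_\star$ in \eqref{obt-diminish} is infinite.

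The first step is to identify each $p_n$ with a hitting probability for \emph{unrestricted} SRW on $\bZ^d$. On the interval $[\eta_n,\sigma_n)$ the walker $X_t$ lies at interior sites of $\bG_{\eta_n}$, and in fact $\bG_t=\bG_{\eta_n}$ throughout this interval (the graph only grows upon visits to $\partial$). Since at an interior site the $\bG_t$-degree equals $2d$ and all $\bZ^d$-neighbors are in $\bG_t$, the walker's transitions coincide transition-by-transition with those of SRW on $\bZ^d$, yielding
\begin{equation*}
 p_n=\bP^{\bZ^d}_{X_{\eta_n}}\bigl(\tau_0<\tau_{\partial \bG_{\eta_n}}\bigr),
\end{equation*}
where $\bP^{\bZ^d}$ is the law of SRW on $\bZ^d$. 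In particular $p_n\le C|X_{\eta_n}|^{2-d}$ by the classical Green's function estimate.

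Next, I would refine this bound using the positive density of $\partial\bG_{\eta_n}$ away from the past trajectory. Because edges are opened only at sites visited by the walker, every unvisited site of $\partial\bD_0$ still belongs to $\partial\bG_{\eta_n}$; by ergodicity $\partial\bD_0$ has density at least some $\alpha>0$, so in any region not yet explored by the (finite) trajectory $(X_0,\ldots,X_{\eta_n})$, the exit time $\tau_{\partial\bG_{\eta_n}}$ under $\bZ^d$-SRW is of order unity. Combined with the $\bZ^d$ heat kernel or a Varopoulos--Carne-type inequality, this produces an additional exponential-in-distance factor, making $p_n\ll |X_{\eta_n}|^{2-d}$ whenever $X_{\eta_n}$ is well separated from the visited region.

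The crux, and the main obstacle, is to sum these bounds over $n$ without circularity: one wants $\sum_n|X_{\eta_n}|^{2-d}<\infty$, which is essentially itself a transience statement. I would attempt a multi-scale bootstrap, partitioning indices by the dyadic shell $|x|\in[2^k,2^{k+1})$ containing $X_{\eta_n}$, and bounding the number $N_k$ of excursions beginning in each shell via escape probability estimates coming from Barlow's Gaussian heat-kernel bounds on the infinite supercritical cluster $\bD_0$, together with Rayleigh monotonicity (since $\bG_t\supseteq\bD_0$, effective resistances to infinity in $\bG_t$ are no larger than in $\bD_0$, so escape probabilities are at least as large). A potentially cleaner alternative is to build a global supermartingale from the $\bD_0$-harmonic hitting function $h(x)=\bP^{\bD_0}_x(\tau_0=\infty)$, controlling the OBT-induced corrections at each first visit to a site of $\partial\bD_0$ using percolation geometry; each such visit contributes only a bounded shift in $h$, and occurs at most once per $\partial$-site. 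In either route, the delicate technical point is the already-explored neighborhood of the past trajectory, where $\partial\bG_{\eta_n}$ has been depleted and the local density argument breaks down; handling this would likely invoke an isoperimetric or shape-theorem input for supercritical percolation to argue that the depleted region is asymptotically negligible on the scales that actually drive the sum.
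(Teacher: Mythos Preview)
The statement you are attempting to prove is stated in the paper as Conjecture~\ref{cnj:perc-sup-crit}, not as a theorem or proposition; the paper offers \emph{no} proof of it and explicitly presents it as an open problem motivated by Proposition~\ref{obstacles}. There is therefore nothing in the paper to compare your attempt against.

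As for the substance of your plan: you correctly identify the starting point (Lemma~\ref{thm0}(b) and the representation of $p_n$ as a $\bZ^d$-\abbr{srw} hitting probability, exactly as in Remark~\ref{rmk:thm0}), and you correctly diagnose why Proposition~\ref{obstacles} fails here ($\partial\bD_0$ has positive density, so $S_\star=\infty$). But you also put your finger on the genuine obstruction and do not overcome it: bounding $\sum_n p_n$ by $\sum_n |X_{\eta_n}|^{2-d}$ and then trying to show the latter is finite is circular, since controlling the growth of $|X_{\eta_n}|$ (or the shell counts $N_k$) is precisely the transience you are trying to establish. Your proposed fixes---Rayleigh monotonicity from $\bG_t\supseteq\bD_0$, a supermartingale built from the $\bD_0$-harmonic function $h$, or exponential gain from the unexplored positive-density boundary---are all plausible heuristics, but none is carried through, and each faces a real difficulty. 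In particular, Rayleigh monotonicity for effective resistance does not directly transfer to the \emph{interacting} walk (which is not the \abbr{srw} on any fixed network), and the $\bD_0$-harmonic function $h$ is not harmonic for the \abbr{obt} dynamics at boundary sites, so the ``bounded shift per first visit'' needs a quantitative bound summable over the infinitely many sites of $\partial\bD_0$ that the walk will touch---which again presupposes control on the trajectory. What you have written is a reasonable outline of where the difficulty lies, but it is not a proof; the conjecture remains open.
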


Having seen the effect of the initial graph on recurrence 
versus transience for certain interacting walks and graphs, 
we turn to the implications of asymptotic regularity 
of $\bG_t$. To this end, we first 
consider {\emph{expanding glassy spheres}} interactions, 
in which growth requires certain 
number of visits by the walk to the graph's boundary, 
at which point a global expansion of the graph occurs.
\begin{defn}\label{def:egs}
Fix $c \ge 1$, $N(k) \ge 1$ and infinite 
(connected, locally finite), graph $\overline{\bG}$, 
setting $\overline{\bB}_k:=\bB^{\overline{\bG}}(0,ck)$.
The {\emph{expanding glassy spheres}}
(\abbr{egs}) interaction consists of 
\abbr{srw} $Z_t$ on $\bG_t=\overline{\bB}_k$
for $t\in[\tau_k,\tau_{k+1})$, starting at $Z_0=0$ 
and with $\mathcal{F}^Z$-stopping times $\tau_1:=0$, 
$$
\tau_{k+1}:=\inf\{s > \tau_k : \sum_{t=\tau_k}^{s-1}
\mathbb{I}_{\partial\overline{\bB}_k} (Z_t) =N(k)\}
\,,\; k \ge 1 \,.
$$  
Alternative sets may be used as well. For example, 
as the name \abbr{egs}  suggests, in case 
$\overline{\bG}=\mathbb{Z}^d$, $d \ge 1$, we define 
such \abbr{egs} $\{Z_t\}$ as being confined to 
the projected Euclidean ball $\bB_{ck}$ 
until making the prescribed number of visits 
$N(k)$ to its boundary, at which time this 
projected ball expands to $\bB_{c(k+1)}$, and so on 
(instead of using the graph distance
on $\bZ^d$ for defining such balls).
\end{defn}

Employing Lemma \ref{thm0} we determine 
the transition between recurrence and transience 
for \abbr{egs} on $\bZ^d$, $d\ge 2$ 
in terms of asymptotic growth of the prescribed 
hit counts $\{N(k)\}$ (showing in particular 
that for 
$d=2$, such \abbr{egs} is always recurrent).
\begin{ppn} \label{glassy} 
For $\overline{\bG}$ of bounded degrees
and   
$\mathcal{C}_k:=\{x\in\overline{\bB}_k :
d^{\overline{\bG}}(x,\partial\overline{\bB}_k)=1\}$,
\begin{align}\label{egs-rec}
\sum_{k=1}^\infty N(k) \inf_{x\in\mathcal{C}_k}\mathbb{P}_x(\text{\abbr{srw} on }\overline{\bG}\text{ hits }0\text{ before }\partial\overline{\bB}_{k})=\infty,
\end{align}
yields a.s. sample path recurrence for
the corresponding \abbr{egs}, whereas if
\begin{align}\label{egs-trans}
\sum_{k=1}^\infty N(k)\sup_{x\in\mathcal{C}_k}\mathbb{P}_x(\text{\abbr{srw} on }\overline{\bG}\text{ hits }0\text{ before }
\partial\overline{\bB}_{k+1})<\infty,
\end{align}
then a.s. the sample path of the corresponding 
\abbr{egs} is transient. In particular, 
for $\overline{\bG}=\bZ^d$, $d\ge 2$, 
we have zero-one law for transience/recurrence of 
the \abbr{egs} interaction sample path which 
is a.s. 
transient if and only if 
\begin{align}\label{egs-zd}
\sum_{k=1}^\infty N(k)k^{1-d}<\infty \,.
\end{align}
\end{ppn}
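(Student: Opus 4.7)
The plan is to invoke Lemma \ref{thm0}, decomposing $S=\sum_n p_n$ epoch by epoch. Within the $k$-th epoch $[\tau_k,\tau_{k+1})$ the graph is frozen at $\bG_t=\overline{\bB}_k$, so on each sub-interval $[\eta_n,\sigma_n)$ the walk $Z_t$ has the same law as \abbr{srw} on $\overline{\bG}$ until its first entry to $\partial\overline{\bB}_k=\partial\bG_{\eta_n}$; hence for every excursion index $n$ with $\eta_n\in[\tau_k,\tau_{k+1})$,
\[
p_n\;=\;\mathbb{P}_{Z_{\eta_n}}\bigl(\text{\abbr{srw} on }\overline{\bG}\text{ hits }0\text{ before }\partial\overline{\bB}_k\bigr).
\]
Writing $|I_k|$ for the number of such excursions, exactly one is the \emph{first}, at $\eta_n=\tau_k$, with $Z_{\tau_k}$ one step (inside $\overline{\bB}_{k-1}$) from $\partial\overline{\bB}_{k-1}$; each of the remaining $|I_k|-1$ excursions starts a single step after a visit to $\partial\overline{\bB}_k$, so $Z_{\eta_n}\in\mathcal{C}_k$.

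Next I establish two-sided control of $|I_k|$. The bound $|I_k|\le N(k)$ is immediate, as every excursion registers at least one visit to $\partial\overline{\bB}_k$. For the matching lower bound, bounded degree of $\overline{\bG}$ furnishes a constant $D$ such that from every $x\in\partial\overline{\bB}_k$, the \abbr{srw} on $\overline{\bB}_k$ moves to an interior neighbor with probability at least $1/D$. Consequently each boundary sojourn $\eta_{n+1}-\sigma_n$ is stochastically dominated by a $\mathrm{Geom}(1/D)$ variable, and a concentration argument for the partial sums (matched against the deterministic budget $\sum_{n\in I_k}(\eta_{n+1}-\sigma_n)=N(k)$) shows that almost surely $|I_k|\ge N(k)/(3D)$ for all but finitely many~$k$.

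For part (a), with $\alpha_k$ the infimum in \eqref{egs-rec}, each subsequent excursion satisfies $p_n\ge\alpha_k$, so a.s.
\[
S\;\ge\;\sum_{k\ge k_0}(|I_k|-1)\alpha_k\;\ge\;\tfrac{1}{4D}\sum_{k\ge k_0}N(k)\alpha_k\;=\;\infty,
\]
and Lemma \ref{thm0}(a) delivers a.s. sample-path recurrence. For part (b), with $\beta_k$ the supremum in \eqref{egs-trans}, each subsequent excursion has $p_n\le\sup_{x\in\mathcal{C}_k}\mathbb{P}_x(\text{hit }0\text{ before }\partial\overline{\bB}_k)\le\beta_k$, while the first-excursion term is controlled by strong Markov at the first entry to $\mathcal{C}_k$ from $Z_{\tau_k}\in\overline{\bB}_{k-1}$, yielding $p_{\text{first}}\le\beta_{k-1}+\beta_k$. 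Since $N(k)\ge 1$,
\[
S\;\le\;\sum_k\bigl(\beta_{k-1}+\beta_k+(N(k)-1)\beta_k\bigr)\;\le\;3\sum_k N(k)\beta_k\;<\;\infty,
\]
and Lemma \ref{thm0}(b) (in the ambient regime where \abbr{srw} on $\overline{\bG}$ is transient) yields a.s. sample-path transience.

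For the $\bZ^d$ specialization, classical estimates give $\alpha_k\asymp\beta_k\asymp k^{1-d}$ for $d\ge 3$ (via the Green-function asymptotics $G(x,0)\asymp|x|^{2-d}$) and $\asymp 1/(k\log k)$ for $d=2$; the latter case forces $\sum_k N(k)\alpha_k=\infty$ universally, hence always recurrence. The sharp criterion \eqref{egs-zd} follows, and as its convergence is a deterministic property of $\{N(k)\}$, the zero-one law for transience/recurrence is automatic. The main technical obstacle is the almost-sure lower bound $|I_k|\gtrsim N(k)$ and the careful accounting of the lone first excursion per epoch; once these are settled, Lemma \ref{thm0} applies mechanically.
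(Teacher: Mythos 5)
Your overall architecture is the paper's: reduce to Lemma \ref{thm0}, note that within epoch $k$ every excursion start $Z_{\eta_n}$ lies in $\mathcal{C}_k$ except the first (which lies in $\mathcal{C}_{k-1}$, whence the shift to $\partial\overline{\bB}_{k+1}$ in \eqref{egs-trans}), bound the number of excursions per epoch by $N(k)$ from above and by a positive fraction of $N(k)$ from below using bounded degree, and finish with potential theory in $\bZ^d$. The transience half and the $\bZ^d$ specialization are fine (your detour through ``strong Markov at first entry to $\mathcal{C}_k$'' is unnecessary, since $Z_{\tau_k}\in\mathcal{C}_{k-1}$ gives $p_{\rm first}\le\beta_{k-1}$ directly, but it is harmless).

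There is, however, a genuine gap in the recurrence half: the claim that almost surely $|I_k|\ge N(k)/(3D)$ \emph{for all but finitely many} $k$ does not follow from your concentration argument, and is false in general. The failure probability $\bP(|I_k|<N(k)/(3D))$ decays only like $e^{-cN(k)}$, which is not summable over $k$ unless $N(k)\to\infty$ sufficiently fast; when $N(k)$ is bounded (the hypotheses only give $N(k)\ge1$) and boundary vertices can have boundary neighbours, the event $\{|I_k|=1\}$ has probability bounded below uniformly in $k$, so by the second Borel--Cantelli lemma your bound fails infinitely often. Moreover, even granting it, the step $(|I_k|-1)\alpha_k\ge\frac{1}{4D}N(k)\alpha_k$ collapses for small $N(k)$ (e.g.\ $N(k)=1$, $|I_k|=1$ gives $0$ on the left). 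The correct route, which is what the paper does, is to avoid any term-by-term almost-sure bound: the numbers of excursions $\{L_k\}$ stochastically dominate \emph{independent} Binomial$(N(k),1/D)$ variables $\{L_k'\}$ (each of the $N(k)$ boundary visits independently launches a new excursion with probability at least $1/D$), and one then applies a second-moment argument to the aggregate $T_n=\sum_{k\le n}\alpha_k L_k'$: since $\Var(T_n)\le\E T_n\uparrow\infty$ under \eqref{egs-rec}, Chebyshev gives $T_n/\E T_n\to1$ in probability, and monotonicity of $T_n$ upgrades this to $T_\infty=\infty$ a.s., hence $S=\infty$ a.s. Replacing your ``all but finitely many $k$'' lemma by this aggregate Chebyshev step repairs the proof; everything else you wrote stands.
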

\begin{remark} The assumed uniform bound on 
deg$_{\overline{\bG}}(z)$ is only required for getting 
the factor $N(k)$ within the sum on \abbr{lhs} of \eqref{egs-rec}.
To this end it suffices to have a uniform (in $k$ and $z$), upper 
bound on the expected hitting time of $\mathcal{C}_k$ by the 
\abbr{srw} on $\overline{\bB}_k$ starting at 
$z \in \partial \overline{\bB}_k$. Some such 
condition is relevant for recurrence/transience of \abbr{egs}.
Indeed, consider $c=1$ and $\overline{\bG}$ arranged in layers, 
with each site in $k$-th layer (i.e. of distance $k$ from $0$), 
having $\ell^{\pm}_{k}=\ell_k q_k p_k^{\pm} \ge 1$ edges to 
$(k \pm 1)$-th layer (with $\ell_0^-=0$), and 
$\ell^0_k = \ell_k (1-q_k) p_k^-$ edges to its own layer. 
For \abbr{egs} on such graph, $t \mapsto 
d^{\overline{\bG}}(0,Z_t)$ evolves up to holding 
times within layers, as a modified birth-death 
chain $W_s$ on $\bZ_+$, starting at $W_0=0$, moving 
with probability $p^\pm_k$ from $k$ to $(k \pm 1)$, 
but opening edge $(k,k+1)$ only after 
an independent Binomial$(N(k),q_k)$ steps from 
$k$ to $(k-1)$ are made by $\{W_s\}$. Conditions 
\eqref{egs-rec} and \eqref{egs-trans} amount 
to divergence and convergence, respectively, of 
$\sum_k N(k) \bP_{k-1}(W_s$ hits $0$ before $k)$,
which for uniformly bounded $\{q_k\}$ is indeed 
a sharp criterion for recurrence/transience 
of $\{W_s\}$ (and thereby of the \abbr{egs}). 
However, the latter series is missing the factor 
$q_k$, so for unbounded $\{q_k\}$ it is often 
wrong for determining transience versus recurrence
of $\{W_s\}$.  
\end{remark}

Building on the insight provided by Proposition \ref{glassy},
we relate the regularity, as defined below, of the 
graphs $\bD_t$ produced by an \abbr{robt} 
interaction model on $\mathbb{Z}^d$, $d \ge 2$, with 
the a.s. 
sample path recurrence for the corresponding \abbr{srw}.  
\begin{defn}\label{def:ars}
We say that $\bK$ is a $\gamma$-{\emph{almost regular shape}}
for growing domains $\{\bD_t\}$ in $\bZ^d$, $d \ge 2$, 
if $\bD_t \supseteq f(t) \bK \cap \bZ^d$ for all $t$ large 
and some non-decreasing $f(\cdot) \ge 1$, such that 
$d^{\bD_t}(z, f(t) \bK) \le \gamma \log f(t)$ for all $z \in \bD_t$.
\end{defn}
\begin{ppn} \label{conditional}
There exist $c_d>0$, such that if $0 \in \bD_0$ a 
finite connected domain in $\mathbb{Z}^d$, $d\ge2$ 
and the ball $\bB$ is $c_d$-almost regular shape 
for the growing domains $\bD_t$, then $0$ is a.s. 
recurrent for the sample path of
any \abbr{srw} $\{R_t\}$ of \abbr{robt} interaction with $\bD_t$.
\end{ppn}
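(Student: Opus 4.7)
The plan is to apply Lemma \ref{thm0}(a), which reduces the claim to showing $S = \sum_n p_n = \infty$ almost surely, where $p_n$ is the conditional probability that \abbr{srw} on the fixed graph $\bG_{\eta_n}$, started at $X_{\eta_n}$, hits $0$ before $\partial \bG_{\eta_n}$. Writing $\rho_n := f(\eta_n)$ and $B_n := \rho_n \bB \cap \bZ^d$, the $c_d$-almost regularity of $\bB$ gives $B_n \subseteq \bG_{\eta_n}$ with every point of $\bG_{\eta_n}$ within graph distance $c_d \log \rho_n$ of $B_n$, and each vertex in the interior of $B_n$ lies automatically in the interior of $\bG_{\eta_n}$ (all its lattice neighbors are in $B_n$).

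First, I would lower bound $p_n$ by a two-step strategy. With probability at least $(2d)^{-c_d \log \rho_n} = \rho_n^{-c_d \log(2d)}$ (using that the maximum degree of $\bG_{\eta_n}$ is at most $2d$), the walker traces a fixed geodesic of length $\le c_d \log \rho_n$ from $X_{\eta_n}$ into the interior of $B_n$; since $X_{\eta_n}$ is already interior, the geodesic can be chosen within the interior subgraph of $\bG_{\eta_n}$. Once inside the interior of $B_n$, the walker's law on $\bG_{\eta_n}$ agrees with that of \abbr{srw} on $\bZ^d$ up to first exit from $B_n$ (the walker must hit $\partial B_n$ before $\partial \bG_{\eta_n}$, since interior of $B_n$ is disjoint from $\partial \bG_{\eta_n}$), so classical hitting-probability estimates on $\bZ^d$ give $\P_y(T_0 < T_{\partial B_n}) \ge c \rho_n^{1-d}$ for $d \ge 3$ (respectively $\ge c(\rho_n \log \rho_n)^{-1}$ for $d = 2$) even in the worst case $|y| \asymp \rho_n$. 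Combining,
\begin{equation*}
p_n \ge c \rho_n^{-\alpha_d}, \qquad \alpha_d := d - 1 + c_d \log(2d),
\end{equation*}
modulo a harmless $\log \rho_n$ penalty when $d = 2$.

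Next, I would count excursions at each dyadic scale. Set $\cN_R := \{n : \rho_n \in [R, 2R)\}$. To grow the contained ball from radius $R$ to $2R$ the graph must acquire $\asymp R^d$ new vertices, while each boundary visit in the \abbr{robt} interaction contributes at most a bounded number of new vertices; hence the walker performs at least $c R^d$ boundary visits during this regime. The crucial input is that a single excursion cannot absorb too many of these visits: since the boundary of $\bG_t$ is confined to the graph-$c_d \log \rho_t$-neighborhood of $B_t$, a gambler's-ruin type estimate along the tendrils (combined with the fact that a positive fraction of the neighbors of each ball-surface vertex is interior to $\bG_t$) bounds the boundary time $T_n^{\mathrm{bd}} := \eta_n - \sigma_{n-1}$ per excursion by $(\log \rho_n)^\gamma$ (for some absolute $\gamma$) in expectation, and hence almost surely via standard concentration. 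This yields $|\cN_R| \ge c R^d / (\log R)^\gamma$.

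Putting the two estimates together,
\begin{equation*}
\sum_n p_n \ge c \sum_k |\cN_{2^k}| \cdot (2^k)^{-\alpha_d} \ge c' \sum_k \frac{2^{k(d-\alpha_d)}}{k^\gamma},
\end{equation*}
which diverges as long as $\alpha_d < d$, i.e.\ $c_d \log(2d) < 1$. Taking $c_d := 1/(2\log(2d))$ yields $d - \alpha_d = 1/2 > 0$, hence $S = \infty$ a.s., and Lemma \ref{thm0}(a) delivers a.s.\ recurrence of $0$. The main obstacle is the polylogarithmic control of boundary times $T_n^{\mathrm{bd}}$: the walker on $\partial \bG_t$ is time-inhomogeneous on a region whose precise geometry depends on both past growth and walker history, and it is precisely the almost-regular assumption with small $c_d$ that confines this region to a log-diameter neighborhood of $B_t$ where classical exit-time estimates apply.
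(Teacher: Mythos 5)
There are two genuine gaps, and both stem from your decision to run the argument through the excursion decomposition $\{\eta_n,\sigma_n\}$ of Lemma \ref{thm0} rather than through purpose-built stopping times. First, your lower bound on $p_n$ requires a path from $X_{\eta_n}$ into the bulk of $B_n$ that avoids $\partial\bG_{\eta_n}$, because the excursion is killed at $\sigma_n$, the first return to that frozen boundary set. The assertion that ``the geodesic can be chosen within the interior subgraph'' is unjustified and in general false: the set of full-degree vertices of $\bG_{\eta_n}$ need not be connected, and an \abbr{robt} mechanism consistent with $c_d$-almost regularity can leave $X_{\eta_n}$ at an interior vertex of a tendril (say the centre of a small fully-opened chamber) separated from $\bB_{f}$ by a one-vertex ``collar'' lying in $\partial\bG_{\eta_n}$; then $p_n=0$ for that excursion. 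Second, your count $|\cN_R|\ge cR^d/(\log R)^\gamma$ needs an upper bound on how many boundary visits a single interval $[\sigma_n,\eta_{n+1})$ can absorb. The ``gambler's-ruin along the tendrils'' giving a polylogarithmic bound is only asserted, and is not true for arbitrary tendril geometries allowed by the hypothesis (a tree-like tendril of graph-depth $w=c_d\log f$ has expected escape time of order $2^{w}$, i.e.\ polynomial in $f$); moreover the passage from an expectation bound to simultaneous a.s.\ control of order $R^d$ dependent quantities is not ``standard concentration.'' You flag this second point yourself as the main obstacle; it is indeed where the approach breaks.

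The paper removes both obstacles at once by not using Lemma \ref{thm0}'s stopping times at all: it sets $\sigma_\ell$ to be the $w_\ep\ell$-th boundary visit after $\xi_m$, with $w_\ep\asymp\log m$. Then (i) the lower bound $Cm^d$ on the total number of boundary visits in $[\xi_m,\xi_{2m-1}]$ immediately yields $L\ge Cm^d/w_\ep$ stopping times, with no control of per-excursion boundary time needed; and (ii) the target event is merely ``hit $0$ during $[\sigma_\ell,\sigma_{\ell+1})$,'' so the connecting path of length $<w_\ep$ may freely touch the boundary (it cannot trigger $\sigma_{\ell+1}$, which requires $w_\ep$ further boundary visits), after which the walk, once deep inside $\bB_f$, makes no boundary visits until it hits $0$ or $\partial\bB_f$. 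Recurrence then follows from L\'evy's Borel--Cantelli applied directly to these events. Your quantitative estimates (cost $\rho^{-c_d\log(2d)}$ for the connecting path, $\rho^{1-d}$ for hitting $0$, and the choice of $c_d$ small enough that the product times $\rho^{d}/\log\rho$ diverges along dyadic scales) do match the paper's; the missing idea is the redefinition of the stopping times.
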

\begin{remark} 
If the range of an \abbr{robt} interaction model 
$\{R_s, s \le t\}$ contains the whole $f(t) \bB$ 
ball within $\bD_t$, one may be tempted to 
conclude that recurrence of $\{R_t\}$ then 
trivially follows since every site of $\bZ^d$ 
is for sure being visited at least once. 
However, as mentioned before, unlike \abbr{srw} 
on fixed graph, in case of growing domains the 
walk may nevertheless w.p.1 return to $0$ only 
finitely many times. So,
Proposition \ref{conditional} provides a non-trivial 
conclusion, even in this setting. 
\end{remark}

\begin{remark}
For any $\bK \subset \mathbb{R}^d$ and $f>0$, let
$(f\bK)_d = f \bK \cap \bZ^d$ denote the corresponding 
lattice projection. Proposition \ref{conditional} 
then holds for any $\bK$ such that for some $c$ finite,
\begin{align}\label{ars-K}
\liminf_{f\to\infty} \;
\inf_{\{z\in (f \bK)_d : d(z,\partial (f\bK)_d ) \ge c\log f\}} \;
\frac{1}{\log f}\log
\bP_z(\text{\abbr{SRW} hits }0\text{ before }\partial(f\bK)_d)
>-d\,.  
\end{align}
\end{remark}


Transience w.p.1. is proved in \cite[Sect. 6]{ABGK} for
the following monotone increasing conductance model 
on edges of $\bZ^2$: starting at $t=0$ with walker at 
the origin and conductance $1$ at each edge, upon 
walker's first visit of each vertex, the conductances 
of its adjacent edges to the right/up/down are increased to $2$.  
Adapting the arguments of \cite[Sect. 6]{ABGK}, we next provide
examples of a.s. transient \abbr{fobt} interaction between 
\abbr{srw} $\{E_t\}$ and the corresponding growing domains 
$\{\bD_t\}$ in $\bZ^2$, emphasizing 
the role of the initial graph $\bD_0$.  
\begin{ppn} \label{biased_opening}
Consider the \abbr{srw} $\{E_t\}$ on $\bD_t \subseteq \bZ^2$, 
that starts from $E_0=0 \in \bD_0$ and opens only the 
three right/up/down edges adjacent to each site that it first 
visits (where after each such opening the walk stays 
put for one step before choosing its next position, 
now on $\bD_{t+1}$).  
\newline
(a) If $\bD_0$ consists of the vertices of $\bZ^2$ with each edge 
of $\bZ^2$ independently chosen to be in $\bD_0$ with same 
probability $p \in [0,1)$, then the sample path of $E_t$ is 
$\bP_p$-a.s. transient. 
\newline
(b) Alternatively, the sample path of $\{E_t\}$ is a.s. transient
whenever $k^{-r} |\bD_0\cap[-k,k]^2| \to 0$ as $k \to \infty$, 
for some constant $r<3/4$.
\end{ppn}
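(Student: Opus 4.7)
I'll follow \cite[Sect.~6]{ABGK} and deduce, in both parts, sample-path transience at $0$ by showing that the $x$-coordinate $X_t$ of $E_t$ tends to $+\infty$ almost surely; by Definition \ref{def:rec} this immediately implies transience at $0$. (One could alternatively invoke Lemma \ref{thm0}(b) and try to sum the excursion probabilities $p_n$, but the drift-to-$\infty$ route is cleaner.)

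The structural observation is that for any visited site $v$ one has $\deg_{\bD_t}(v)\in\{3,4\}$: the right, up and down edges at $v$ lie in $\bD_t$ as soon as $v$ is first visited, while the left edge $(v-e_x,v)$ lies in $\bD_t$ iff either $v-e_x\in V_t$ or $(v-e_x,v)\in\bD_0$. Consequently
\[
\E\bigl[X_{t+1}-X_t\mid\cF_t\bigr]\;=\;\tfrac{1}{3}\Indb{\deg_{\bD_t}(E_t)=3}\,,
\]
so $X_t$ is a submartingale with bounded increments, and the whole problem reduces to a uniform-in-$t$ lower bound on $\P(\deg_{\bD_t}(E_t)=3\mid\cF_t)$ followed by a martingale concentration argument.

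For part (a), a standard revelation scheme keeps the $\bD_0$-status of each unrevealed edge Bernoulli$(p)$ conditionally on $\cF_t$; combined with the two-dimensional structure of the walker's range (which supplies a uniform positive lower bound on $\P(E_t-e_x\notin V_t\mid\cF_t)$ via control of the left-frontier of $V_t$), this gives $\P(\deg_{\bD_t}(E_t)=3\mid\cF_t)\ge c_p>0$ with $c_p$ depending only on $p<1$. Azuma--Hoeffding applied to $X_t-\E X_t$ upgrades $\E X_t\ge c_p t/3$ into $X_t\to+\infty$ a.s. For part (b), the sparseness $|\bD_0\cap[-k,k]^2|=o(k^r)$ with $r<3/4$ upper-bounds the number of vertices in a box of side $k$ at which the left edge can ever be supplied by $\bD_0$; combined with the crude bound $|V_t|\le t$ and the diffusive transverse spread of the walk, one obtains on every scale a similar uniform lower bound on the degree-$3$ density, and the same Azuma--Hoeffding argument yields $X_t\to+\infty$ a.s.

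The main obstacle is the quantitative drift estimate, especially in part (b). The threshold $r<3/4$ appears naturally when balancing three scales: the diffusive transverse spread $\asymp\sqrt t$ of the walk, the drift budget $\asymp\sqrt t$ from degree-$3$ visits within a box of that linear size, and the maximum $\bD_0$-contribution $\asymp t^{r/2}$ of left edges in the same box; only for $r<3/4$ does the drift strictly dominate at every scale. Ruling out pathological trajectories along which $X_t$ fluctuates back to $0$ despite $\E X_t\to\infty$ requires coupling the horizontal motion of $E_t$ with a genuine drifted nearest-neighbor walk on $\bZ$ and invoking the classical transience of the latter.
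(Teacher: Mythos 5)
There is a genuine gap, and it sits exactly where you have placed the ``main obstacle.'' Your reduction asks for ``a uniform-in-$t$ lower bound on $\P(\deg_{\bD_t}(E_t)=3\mid\cF_t)$,'' but no such bound exists: $\bD_t$ is previsible and $E_t$ is $\cF_t$-measurable, so this conditional probability is just the indicator $\Indb{\deg_{\bD_t}(E_t)=3}$, and it equals $0$ at the (overwhelming majority of) times when the walker sits at a site whose left edge is already open --- which happens whenever the left neighbour has been visited before, an event the two-dimensional walk produces constantly since it revisits its range incessantly. The correct order of magnitude for the number of drift opportunities in $[0,n]$ is $n^{3/4-o(1)}$, not $cn$: these are essentially the times at which both the current site and its left neighbour are fresh, a frontier-type event whose count is the whole point of the argument. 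Consequently your claimed drift $\E X_t\ge c_p t/3$ is false, and the Azuma--Hoeffding step has nothing to concentrate around. Establishing the lower bound of order $n^{3/4-2\ep}$ on the number of such ``fresh pair'' times is the hard combinatorial core of the proof (the paper does it by adapting the tan-time/funnel machinery of \cite[Sect.~6]{ABGK}, coupling $\{E_t\}$ to an honest \abbr{srw} $\{R(t)\}$ so that $(E_t-R(t))_1$ is non-decreasing and increases at a positive fraction of these times, and then using $|R(n)|=o(n^{1/2+\ep})$ with $\ep<1/12$). Your closing heuristic about balancing $\sqrt{t}$ transverse spread against $t^{r/2}$ initial edges correctly locates where $3/4$ comes from, but it is not a proof and it contradicts the uniform-density claim made earlier in your argument.

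Two further points. First, in part (a) the statement that the geometry ``supplies a uniform positive lower bound on $\P(E_t-e_x\notin V_t\mid\cF_t)$'' is false for the same reason; what one can do (and what the paper does) is reveal the Bernoulli$(p)$ status of an edge only upon first contact, so that at each of the rare fresh-pair times the left edge is unrevealed and independently closed with probability $1-p$ --- the randomness of $\bD_0$ helps only at those sparse times, it does not create drift at every step. Second, even granting a lower bound on the \emph{expected} number of degree-$3$ visits, you still need a high-probability (summable-error) lower bound on the \emph{realized} count, since the predictable drift $A_n=\frac13\sum_{s<n}\Indb{\deg_{\bD_s}(E_s)=3}$ is random; this is again supplied by the tan-time second-moment argument, not by Azuma applied to $X_t$.
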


Our final result deals with a.s. transience for 
the {\emph{probing simple random walk}} (\abbr{psrw}),
$\{K_t\}$ on growing domains $\{\bD_t\}$ in $\bZ^d$, 
$d \ge 2$. Starting at $K_0=0$ and $\bD_0=\{0\}$, such 
\abbr{psrw} is allowed to send at time $t$ some  
$\mathcal{F}_t$-adapted number of probes $m(t)$, with
each probe adding precisely one site to $\bD_t$
(and opening all relevant edges connecting those sites with 
the existing graph), prior to the walk's move from 
$K_t$ to $K_{t+1} \in \bD_{t+1}$. The aim of 
the \abbr{psrw} is to guarantee a.s. transience 
of its sample path with minimal asymptotic 
running average number of probes
$\overline{m}_t:=t^{-1} \sum_{s=1}^t m(s)$. Conversely, 
the \abbr{psrw} may aim at a.s. recurrence of 
its sample path with a maximal 
asymptotic running average 
number of probes.
In different
versions of this problem the \abbr{psrw} may or may
not have control on the probes locations and the
number of probes being used in each step. 
\begin{ppn}\label{general_soldier}
$~ $
\newline
(a) For $\bZ^d$, $d\ge2$ and any $\ep>0$, there exist 
$\mathcal{F}_t$-adapted $\{m(t)\}$ and choices of 
the $m(t)$ probe positions at $\bZ^d$-distance-one 
from $\bD_t$, such that eventually $\overline{m}_t<\ep$ and 
the sample path of $K_t$ is a.s. transient. There also exist (some other) such probe numbers and locations for which eventually $\overline{m}_t > \ep^{-1}$ and the 
sample path of $K_t$ is a.s. recurrent. 
\newline
(b) Suppose each probed site is chosen according to the hitting
measure of $\bD_t^c$ by a \abbr{srw} on $\bZ^d$ which starts at
the current position $K_t$  of the \abbr{psrw}. Then there exist 
finite constants $c_d$ and $\mathcal{F}_t$-adapted process $\{m(t)\}$ such that a.s. 
$\limsup_t \overline{m}_t < c_d$, the \abbr{psrw} sample path 
is transient in case $d \ge 3$, and recurrent with 
$\liminf_t \overline{m}_t$ arbitrarily large, in case $d=2$. 
\end{ppn}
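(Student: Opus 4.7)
My plan for each half of the proposition is to combine an explicit probing scheme with the criterion of Lemma~\ref{thm0}, identifying the series $S = \sum_n p_n$ as either divergent (for recurrence) or convergent (for transience).

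For part (a), transience with vanishing probe rate: I would pre-select a binary tree $T$ embedded in $\bZ^d$, $d \ge 2$, realising each tree-edge as a lattice path of length $L \gg 1/\epsilon$. The \abbr{psrw} probes only at times when $K_t$ sits at a leaf of $\bD_t$ that is not yet a leaf of $T$, in which case one probe extends the current branch by one lattice site along $T$. Each $L$-long tree-edge then costs $L$ probes while requiring $\Theta(L^2)$ \abbr{srw} steps on $\bD_t$ to traverse, so $\overline{m}_t \lesssim 1/L < \epsilon$ eventually; transience of $K_t$ follows from Lemma~\ref{thm0}(b) applied to the limit graph $T$ (a subdivided binary tree, transient by a standard time-change from the undecorated binary tree). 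For recurrence with $\overline{m}_t$ arbitrarily large, I would instead fix the recurrent subgraph $\mathbb{H} := \bZ^2 \times \{0\}^{d-2}$ of $\bZ^d$ and set $m(t) \equiv M := \lceil 1/\epsilon \rceil$ at every step, each probe opening the next unopened site of $\mathbb{H}$ in order of graph distance from $0$. Then $\bD_t$ becomes $\mathbb{H} \cap \bB_{r_t}$ with $r_t \sim \sqrt{Mt}$, and the standard logarithmic return-probability estimate for \abbr{srw} on $\bZ^2$ gives $p_n \gtrsim 1/\log r_{\eta_n}$, a non-summable series, so Lemma~\ref{thm0}(a) yields a.s.\ recurrence.

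For part (b) with $d \ge 3$: take $m(t) \equiv c_d$ for a constant $c_d$ to be chosen large. Because each probe lands at the exit point of an auxiliary \abbr{srw} from $K_t$, heat-kernel and harmonic-measure estimates in $\bZ^d$ imply that with uniformly positive probability each probe falls at distance of order the current radius of $\bD_t$ from $0$; a straightforward induction then produces a nested sequence of Euclidean balls $\bB_{r_t} \subseteq \bD_t$ with $r_t \to \infty$. Combined with the classical estimate $G_{\bB_r}(x,0) \le C|x|^{2-d}$, this yields $p_n \le C r_{\eta_n}^{2-d}$, summable, so Lemma~\ref{thm0}(b) gives a.s.\ transience. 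For $d = 2$, no matter how large $m(t)$ is set, the 2D harmonic exit measure from $K_t$ remains close (on logarithmic scales) to the equilibrium measure of $\bD_t$, so $\bD_t$ grows roughly isotropically and with at most polynomial-in-$t$ diameter; the 2D return-probability lower bound $p_n \ge c/\log \diam(\bD_{\eta_n})$ then makes $\sum p_n = \infty$, and Lemma~\ref{thm0}(a) yields a.s.\ recurrence with $\liminf_t \overline{m}_t$ arbitrarily large.

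The main obstacle lies in part (b) for $d = 2$: one must rule out the possibility that aggressive harmonic-measure probing produces a dendritic $\bD_\infty$ in which $0$ becomes effectively cut off from the walker's sample path. Establishing the uniform lower bound $p_n \ge c/\log \diam(\bD_{\eta_n})$ requires careful harmonic analysis of 2D \abbr{srw} exits from the rough random domains $\bD_t$. By comparison, the constructions in part (a) are comparatively soft once the tree and slab are fixed, and the $d \ge 3$ case of part (b) reduces to a controlled nested-ball induction plus standard Green-function asymptotics.
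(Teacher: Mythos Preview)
Your approach diverges from the paper's in every part and contains gaps that the paper's much simpler constructions avoid.

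In part (a), the binary-tree scheme as written fails: with one probe per leaf visit, a branch vertex of $T$ first reached is a leaf of $\bD_t$, so you open one child; thereafter that vertex has degree $2$ in $\bD_t$, is never again a leaf, and the second child is never opened. Hence $\bD_\infty$ is a single ray, on which \abbr{srw} is recurrent. (The fix---two probes at branch vertices---is easy, and then $K_t$ is literally \abbr{srw} on the subdivided tree, so Lemma~\ref{thm0}(b) is not needed; note that lemma also requires $S<\infty$, which you never verify.) For recurrence, your estimate $p_n\gtrsim 1/\log r_{\eta_n}$ is wrong: $X_{\eta_n}$ lies within one step of $\partial\bD_{\eta_n}$, and from such a point in a two-dimensional ball of radius $r$ the probability of hitting $0$ before the boundary is only $\asymp 1/(r\log r)$. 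Showing $\sum_n p_n=\infty$ then requires controlling how $r_{\eta_n}$ grows in $n$, which you do not address and which is delicate since the deterministically growing boundary may outrun the diffusive walker. The paper instead confines $\bD_t$ to the line $\bZ\times\{0\}^{d-1}$, where recurrence of one-dimensional \abbr{srw} is immediate; for transience it uses a stretched copy of $\bZ^d$ (for $d\ge3$) and invokes Proposition~\ref{biased_opening} for $d=2$.

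For part (b), the paper's device is far simpler than your harmonic-measure program: at the first visit to each site $z$, release the $\cF_t$-adapted minimal number of probes needed until all $2d$ neighbours of $z$ are open. Since each probe's first step is uniform on the neighbours of $z$, this count is stochastically dominated by the $2d$-coupon collector time, giving $\limsup_t\overline{m}_t\le c_d$, while $K_t$ becomes exactly \abbr{srw} on $\bZ^d$---transient for $d\ge3$, recurrent for $d=2$, with no domain geometry to analyse. Your constant-$m(t)$ scheme would instead have to prove that harmonic-measure growth from a moving centre produces domains containing macroscopic balls, and your summability of $p_n$ for $d\ge3$ presupposes control of $r_{\eta_n}$ that itself depends on the transience you are proving. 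The $d=2$ obstacle you flag does not arise at all in the paper's argument.
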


\begin{remark}
Two obvious open problems are whether 
part (b) of Proposition \ref{general_soldier}  
holds for any $c_d>0$, $d \ge 3$, and whether
in this context one can also select a process
$m(t)$ yielding a.s. sample path recurrence 
when $d \ge 3$ and transience when $d=2$. 
\end{remark}
We end with the following conjecture and 
related open problems.
\begin{cnj}\label{cnj:fixed-unguided-probes}
In the setting of part (b) of Proposition \ref{general_soldier} 
there exist 
$\mathcal{F}_t$-adapted $m(t)$ which is uniformly bounded 
above by non-random integer $\lambda_d$, and a \abbr{psrw} having 
a.s. transient sample path when $d \ge 3$, and
a.s. recurrent sample path 
with $m(t) \ge 1$, when $d=2$. 
\newline
If this conjecture is valid, does it apply for $\lambda_d=1$ and does the same apply even 
for constant $m(t)=\lambda_d$
(i.e. removing all control from the \abbr{psrw})?
\end{cnj}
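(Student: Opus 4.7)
The plan is to invoke Lemma~\ref{thm0} with uniformly bounded $m(t)\le \lambda_d$, designing an adaptive rule so that the return-probability sum $S=\sum_n p_n$ is infinite in $d=2$ and finite in $d\ge 3$. Since each step adds at most $\lambda_d$ probes, $|\bD_t|\le 1+\lambda_d t$, and hence the $\bZ^d$-diameter of $\bD_t$ is at most $1+\lambda_d t$. I would handle the two regimes separately.

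For $d=2$ recurrence, I would set $m(t)\equiv 1$, which already enforces $m(t)\ge 1$. By definition of $\eta_n$, the vertex $K_{\eta_n}\in\bD_{\eta_n}\setminus\partial\bD_{\eta_n}$ is not on the boundary of the (connected, fixed during $[\eta_n,\sigma_n)$) subgraph $\bD_{\eta_n}\subseteq\bZ^2$, whose diameter is $O(\eta_n)$. Using the logarithmic character of the planar Green function, together with standard effective-resistance estimates on $\bZ^2$, I would aim to show $p_n\ge c/\log(1+\eta_n)$ whenever $K_{\eta_n}$ lies in the ``interior'' of $\bD_{\eta_n}$ in an averaged sense. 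The divergence of $\sum_n 1/\log n$ then yields $S=\infty$ a.s., and Lemma~\ref{thm0}(a) gives recurrence at $0$. The delicate point is ruling out positions $K_{\eta_n}$ too close to $\partial\bD_{\eta_n}$; I would use \abbr{srw} excursion estimates to argue that such configurations cannot persist along a positive-density subsequence of $n$.

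For $d\ge 3$ transience, take $m(t)\equiv \lambda_d$ with $\lambda_d$ large enough to be determined. The central structural claim---which is what the conjecture really asserts---is an \emph{isotropic coverage} property: under repeated hitting-measure probing, $\bD_t$ a.s.\ contains a $\bZ^d$-ball around $0$ of radius $c_d\, t^{1/d}$, with the walker's position $K_t$ well inserted in it. Granted this, an \abbr{srw} on $\bZ^d$ started at $K_{\eta_n}$---which diffuses within $\bD_t$ so that $|K_{\eta_n}|\asymp \eta_n^{\beta}$ for some $\beta>0$---hits $0$ before $\partial\bD_{\eta_n}$ with probability $p_n\lesssim |K_{\eta_n}|^{2-d}$, by the decay of the $\bZ^d$ Green function. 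Choosing $\lambda_d$ so that $\beta(d-2)>1$ would give $\sum_n p_n<\infty$ a.s., and Lemma~\ref{thm0}(b) would then yield transience.

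The main obstacle is precisely this last step, and it is why the statement is presented as a conjecture. Hitting-measure probes concentrate near the walker's recent trajectory, so $\bD_t$ may develop strong anisotropies with ``shadow'' voids opposite to the direction of motion, potentially trapping the walker near the origin. Establishing the isotropic coverage claim likely requires a multiscale renormalization over dyadic shells around $0$, a martingale control on the walker's displacement on $\bD_t$, and an ergodic argument showing that angular sectors around $0$ fill in at a uniform rate. I expect the two follow-up questions---whether $\lambda_d=1$ suffices, and whether constant non-adaptive $m(t)=\lambda_d$ works---to reduce, once the isotropic coverage is in hand, to verifying the same estimates under the restricted probe-rate regime, since the adaptivity of $m(t)$ plays a merely quantitative role in regularizing the growth of $\bD_t$.
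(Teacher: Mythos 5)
This statement is a \emph{conjecture} in the paper; the authors give no proof of it, and the two follow-up questions are posed as open problems. So there is nothing in the paper to compare your argument against, and what you have written is, by your own admission, a research program rather than a proof: both of its load-bearing steps are left unestablished, and they are precisely the open content of the conjecture.

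Concretely. For $d\ge 3$ you reduce everything to the ``isotropic coverage'' claim that $\bD_t$ contains a ball of radius $c_d t^{1/d}$ around $0$ with $K_t$ well separated from $\partial\bD_t$, and you correctly flag that hitting-measure probes emitted from the walker's current position bias the growth toward the direction of motion, so this claim is exactly what one cannot currently prove. Without it, the bound $p_n\lesssim |K_{\eta_n}|^{2-d}$ and the summability of $S$ in Lemma~\ref{thm0}(b) do not follow. For $d=2$ the gap is of the same nature but you underweight it: the lower bound $p_n\ge c/\log(1+\eta_n)$ requires $K_{\eta_n}$ to sit at distance comparable to $\mathrm{diam}(\bD_{\eta_n})$ from $\partial\bD_{\eta_n}$, whereas with one probe per step the domain has only $t+1$ sites at time $t$ and the walker may well stay within bounded distance of the boundary for all time; moreover Proposition~\ref{biased_opening} shows that a monotone interaction on $\bZ^2$ can produce an a.s.\ transient walk even though $\bZ^2$ itself is recurrent, so recurrence in $d=2$ cannot be inferred from the logarithmic Green function alone. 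The assertion that ``such configurations cannot persist along a positive-density subsequence'' is unsupported and is where the difficulty lives. In short, your outline correctly identifies why the statement is hard, but it does not prove it, and the paper does not either.
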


\section{Proof of Propositions \ref{obstacles}, \ref{glassy} and \ref{conditional}} 

\noindent
{\emph{Proof of Proposition \ref{obstacles}.}}  
First consider an \abbr{obt} extended simple random walk. 
Recall Remark \ref{rmk:thm0} that $p_n$ is the probability that 
\abbr{srw} on $\overline{\bG}$ starting at $Y_{\eta_n}$ 
visits $0$ before $\partial\bG_{\eta_n}$, 
and Definition \ref{defn:esrw} that 
$Y_{\eta_n}\in \mathcal{C}(Y_{\eta_n-1})$. Hence, 
setting 
$$
g(x):= \sup_{y\in\mathcal{C}(x)} \{ \mathbb{P}_y(\text{\abbr{srw} on }\overline{\bG} 
\text{ ever hits }0)\}\,,
$$
we have that for for any $n \ge 1$, 
$$
p_n 
\le \mathbb{P}_{Y_{\eta_n}}(\text{\abbr{srw} on }\overline{\bG}\text{ ever hits }0)
\le g(Y_{\eta_n-1})\,.
$$
By assumption, all sites of $\overline{\bG}$ are already in $\bG_0$, and with the \abbr{obt} interaction enforcing that $Y_t \notin \partial \bG_{t+1}$, the distinct sites 
$\{Y_{\eta_n-1}, n \ge 1\}$ are all in $\partial\bG_0$. Hence,
$S=\sum_{n \ge 1} p_n$ is bounded above by the assumed finite term $S_\star$ of \eqref{obt-diminish} and the a.s. sample path 
transience of $\{Y_t\}$ follows by part (b) of Lemma \ref{thm0}. In case of an \abbr{pobt} interaction, 
the same derivation yields the bound
$$
S \le \sum_{x\in\partial\bG_0} L_x g(x) \,,
$$
where $L_x$ denotes the number of visits 
by $\{Y_t\}$ to $x \in \partial \bG_0$, up 
to the possibly infinite stopping
time $\theta_x := \inf\{ t \ge 0 : 
\bB^{\overline{\bG}}(x,1) \subseteq \bG_t \}$.  
The \abbr{pobt} interaction adds at least one 
edge to $\bG_{t+1}$ upon each 
visit to $Y_t \in \partial \bG_t$ with probability
at least $\ep>0$. Hence, $\bE[L_x|\mathcal{F}_0] \le \ep^{-1} 
{\text{deg}}_{\overline{\bG}}(x)$. In particular, 
almost surely, $S$ is finite if 
$$
\bE [S|\mathcal{F}_0] \le \ep^{-1} 
\sum_{x\in \partial\bG_0} 
{\text{deg}}_{\overline{\bG}}(x) g(x) < \infty \,,
$$
which, for $\overline{\bG}$ of uniformly bounded degrees, 
follows from finiteness of $S_\star$.

Specializing to $\overline{\bG}=\bZ^d$, $d\ge3$, 
of uniformly bounded degree, recall Definition 
\ref{defn:esrw} that here 
$\|y\|_1 \ge \|x\|_1 - \|x-y\|_1 \ge (1-c) \|x\|_1$
for any $y \in \mathcal{C}(x)$. Thus, by the elementary 
potential theory formula 
$$
\mathbb{P}_y(\text{\abbr{srw} on }\mathbb{Z}^d\text{ ever hits }0)
\le c_d \|y\|_1^{2-d} \,,
$$ 
for some finite $c_d$ and all $y$ (see \cite[Proposition 1.5.9]{La}),
we get that $g(x) \le \kappa_d \|x\|_1^{2-d}$, for some $\kappa_d
$ finite, with condition \eqref{obt-zd} implying that  
$S_\star$ is finite.
\qed\\

\noindent
{\emph{Proof of Proposition \ref{glassy}.}} By 
definition of the \abbr{egs} interaction, necessarily 
$\bG_{\eta_n} = \overline{\bB}_k$ for 
$\eta_n \in [\tau_k,\tau_{k+1})$. To each $k \ge 1$ 
correspond $L_k \in [1,N(k)]$ such stopping times,  
and $Z_{\eta_n} \in \mathcal{C}_k$ for all but 
the smallest of these (namely, $\eta_n=\tau_k$, $k \ge 2$),
in which case $Z_{\eta_n} \in \overline{\bB}_{k-1}$ 
is within distance one of $\partial \overline{\bB}_{k-1}$.
Consequently, $S$ of Lemma \ref{thm0} is bounded 
above by the \abbr{lhs} of \eqref{egs-trans}. Further,
if $\sup_z \text{deg}_{\overline{\bG}}(z) \le \overline{\text deg}$ 
finite, then conditional on $\mathcal{F}^Z_t$,
upon each visit of $\partial \overline{\bB}_k$ 
by $Z_t$ (i.e. time $\sigma_n$), 
we have that $Z_{t+1}$ is not in $\partial \overline{\bB}_k$
with probability at least $\ep := 1/\overline{\text deg}$.
It follows that the collection $\{L_k\}$ stochastically 
dominates the independent Binomial$(N(k),\ep)$ 
variables $\{L'_k\}$, hence $S$ stochastically 
dominates the \abbr{lhs} of \eqref{egs-rec} 
with $N(k)$ replaced there by $L'_k$. The 
latter is the monotone upward limit $T_\infty$ 
of a series $T_n = \sum_{k=1}^n q_k L'_k$, 
with $q_k \in [0,1]$ non-random and 
condition \eqref{egs-rec} amounting to 
$\bE T_n \uparrow \infty$. With var$(T_n) \le \bE T_n$, 
we have that $T_n/\bE T_n \to 1$ in probability, hence 
\eqref{egs-rec} yields that a.s. $S \ge T_\infty = \infty$. 
Our thesis about 
the a.s. transience and recurrence of the corresponding 
\abbr{egs} thus follows from Lemma \ref{thm0} (we note
in passing that the assumption of $\overline{\bG}$ 
transient is only used in part (b) of Lemma \ref{thm0}
for dealing with $\partial \bG_{\eta_n} = \emptyset$, 
which can not occur for \abbr{egs}).

In case of $\overline{\bG}=\mathbb{Z}^d$, $d \ge 3$, $c \ge 1$, 
upon replacing $\overline{\bB}_k$ by $\bB_{ck}$ it 
remains only to verify that our conditions \eqref{egs-rec} 
and \eqref{egs-trans} are equivalent to the divergence, 
respectively convergence, of $\sum_k N(k) k^{1-d}$. This 
follows by potential theory, since  
\begin{equation}\label{eq:unif-pot-bd}
k^{d-1} \mathbb{P}_{x} (\text{\abbr{srw} on }\mathbb{Z}^d\text{ hits }0\text{ before }\partial\bB_{ck}) \,,
\end{equation}
is bounded above and below away from zero, uniformly over 
$k \ge 1$ and $x \in \bB_{ck}$ whose graph distance from 
$\partial \bB_{ck}$ is between $1$ and $2dc$. We note in 
passing that having here $x$ within constant distance 
of $\partial \bB_{ck}$, the standard error term turns 
out to be $O(1)$ (see formula of \cite[Proposition 1.5.10]{La}), 
so for the stated uniform lower bound it must be refined by using 
asymptotics of Green's function (cf. \cite[Page 96]{LL}).
In case $d=2$, the probabilities appearing in \eqref{eq:unif-pot-bd}  
are similarly bounded below by $C/(k \log k)$ for some $C>0$, 
all $k$ and relevant $x$ (see \cite[Propostion 1.6.7]{La}; here 
the error term is refined using the asymptotics of potential kernel, cf. \cite[Page 104]{LL}). With $N(k) \ge 1$, it follows that
in this case \eqref{egs-rec} holds, yielding the a.s. recurrence of 
the \abbr{egs}, in agreement with the divergence of 
$\sum_k N(k) k^{1-d}$ for $d=2$.  
\qed \\

\noindent
{\emph{Proof of Proposition \ref{conditional}.}}
With 
$\xi_r:=\inf\{t \ge 0 : \bD_t\cap\bB_r^c\neq\emptyset\}$,
denoting the first time the tip of $\bD_t$ reaches the sphere 
of radius $r$ around $0$, 
we construct $L=O(m^d/\log m)$ stopping 
times $\sigma_1<\sigma_2<\cdots<\sigma_L$ within the 
time interval $[\xi_m,\xi_{2m-1}]$,
such that for some constant $\delta<1$,
\begin{equation}\label{eq:lbd-cond}
m^{d-1+\delta}
\bP(R_s=0 {\text{ for some }} s \in [\sigma_{\ell},\sigma_{\ell+1})\,|\,\mathcal{F}_{\sigma_{\ell}})
\end{equation}
is bounded away from zero, uniformly in $\ell$ and $m$.
Similarly to the proof of Lemma \ref{thm0}, upon 
considering the union of these events over all 
dyadic $m=2^k$, the a.s. sample path recurrence of the 
\abbr{robt} interacting \abbr{srw} 
$\{R_t\}$ then follows 
by Paul L\'evy's extension of Borel-Cantelli.
To this end, since 
the Euclidean ball $\bB$ is $\gamma$-almost regular 
for the growing domains $\bD_t$, it follows that 
for all $t$ large and some non-increasing 
$f(\cdot) \ge 1$,
\begin{equation}
\label{eq:ars-fluc}
\bB_{f(t)} \subseteq \bD_t \subseteq \bB_{f(t)+\gamma \log f(t)}\,.
\end{equation}
We set $w :=\gamma \log (2m)$, the maximal 
fluctuation $\gamma \log f(t)$ in shape of $\bD_t$ 
when $t \le \xi_{2m-1}$ (hence $f(t) \le 2m$). 
From \eqref{eq:ars-fluc} 
one has that $\bD_{\xi_{2m-1}} \supseteq \bB_{2m-w}$
(as otherwise $2m - w > f \ge 2m  -\gamma \log f$
for some $f = f(\xi_{2m-1}) \le 2m$, contradictory to 
our choice of $w$). There are thus at least 
$C' m^d$ edges in $\bD_{\xi_{2m-1}} \setminus \bD_{\xi_m}$,
for some universal constant $C'>0$ and all $m$.
Further, domain growth occurs in \abbr{robt} only when 
$R_t \in \partial \bD_t$, and each such boundary visit 
entails adding at most $C'/C$ edges to $\bD_{t+1}$ 
for some universal constant $C>0$ (see Defn. \ref{def:obt}).
So for each $m$ there are at least $C m^d$
such boundary visits within $[\xi_m,\xi_{2m-1}]$.

Hereafter we fix $\ep>0$ small, set $w_\ep=(1+2\ep)w$, $L=C m^d/w_\ep$ 
and consider the stopping times 
$\{\sigma_\ell, 1 \le \ell \le L \}$, with $\sigma_\ell$ 
denoting the $w_\ep \ell$-th smallest $t \ge \xi_m$ such that 
$R_t \in \partial \bD_t$. Turning to prove the stated
uniform probability lower bound for the corresponding 
events per \eqref{eq:lbd-cond}, fix 
$\sigma=\sigma_\ell$ and 
$f=f(\sigma) \in \mathcal{F}_\sigma$. Recall Defn. \ref{def:obt} 
that $d^{\bD_{\sigma}}(R_{\sigma},\bB_f) \le w$, hence
there exists a path in $\bD_{\sigma}$ of length at most 
$w_\ep-1$ leading from $R_\sigma$ to some specific 
$x \in \bB_f$ such that $d^{\bB_f}(x,\partial \bB_f) \ge \ep w$.
Setting $\delta :=(1+2\ep)\gamma\log(2d)$,
the event $\mathcal{A}_\ell$ that the \abbr{srw} 
$\{R_{\sigma+s}, s \ge 0\}$ on $\bD_{\sigma+s}$ 
takes this specific path has probability at least 
$(2d)^{-w_\ep}=(2m)^{-\delta}$. Since 
$\sigma_{\ell+1} \ge \sigma_{\ell} + w_\ep$ 
and $\bB_f \subseteq \bD_{\sigma_\ell}$,
the event considered in \eqref{eq:lbd-cond}
contains the intersection of $\mathcal{A}_\ell$ 
and the event that starting at position $x$ the 
\abbr{srw} on $\bB_f$ 
visits $0$ before reaching $\partial \bB_f$.   
Clearly, $f \ge m - \gamma \log f \ge m - w \ge m/2$ 
(by \eqref{eq:ars-fluc}). Here $x \in \bB_f$ is 
of Euclidean distance at least $C_0 \log m$ from $\partial\bB_f$
for some constant $C_0=C_0(\ep,\gamma,d)>0$, all $m$ and $\ell$.
Hence, by potential theory, the probability that 
\abbr{srw} starting at $x$ visits $0$ before $\partial \bB_f$,
is bounded below by $\kappa_d m^{1-d}$ for some $\kappa_d>0$,
all $d \ge 2$, $m$ and any such $x$ (see
(\ref{eq:unif-pot-bd}) in case $d \ge 3$, 
and text following it for how to handle $d=2$). 
In conclusion, as claimed, uniformly in $\ell$ and $m$, 
$$
\bP(R_s=0 {\text{ for some }} s \in [\sigma_{\ell},\sigma_{\ell+1})\,|\,\mathcal{F}_{\sigma_{\ell}}) \ge 
\kappa_d m^{1-d} \bP(\mathcal{A}_\ell|\mathcal{F}_{\sigma_\ell}) 
\ge \kappa_d m^{1-d} (2m)^{-\delta} \,.
$$\qed

\section{Proof of Propositions \ref{biased_opening} and \ref{general_soldier}}

\noindent
{\emph{Proof of Proposition \ref{biased_opening}.}} 
We call $m \ge 0$ a \abbr{super-non-nv} time if 
$\{(-1,0)+E_m,E_m\}$ is {\emph{unvisited}} by 
$\{E_t, t < m\}$. We further couple our \abbr{fobt} walk 
$\{E_t\}$ to the \abbr{srw} $\{R(t)\}$ on $\mathbb{Z}^2$, 
both starting at $(0,0)$, so that $E_{t+1}-E_t=R({t+1})-R(t)$ 
except if the edge to the left of $E_t$ is not in $\bD_t$, 
in which case with probability $1/4$ both walks have
the same right/up/down increment, while with probability 
$1/12$ each, $E_{t+1}-E_t$ is the right/up/down increment, 
while $R(t+1)-R(t)=(-1,0)$. Clearly,  
$t \mapsto (E_t-R(t))_1$ is then non-decreasing. Moreover,
independently of $\{E_t, R(t), t \le m \}$,
with probability $1/4$ the value of $(E_t-R(t))_1$ 
increases by one at each \abbr{super-non-nv} 
time $m$ for which the edge to the left of $E_m$ 
is not in $\D_0$. Fixing $\ep>0$, let $\mathcal{A}_n$
denote the event that there exist $n^{3/4-2\ep}$ 
\abbr{super-non-nv} times $m \in [0,n]$ with the
edge to the left of $E_m$ not being in $\bD_0$. 
If $\P(\mathcal{A}_n) \ge 1-C n^{-1}$ for some $C$ finite,  
then a.s.
$(E_n-R(n))_1 \ge 0.1 n^{3/4-2\ep}$ for all $n$ 
large (by Borel-Cantelli lemma it 
holds along dyadic $n_k=2^k$, which  
by monotonicity of $(E_n-R(n))_1$
extends to all $n$ large). 
Since $n^{-1/2-\ep} |R(n)| \to 0$, taking $\ep<1/12$ 
yields the stated a.s. sample path transience of $\{E_t\}$.

Adapting \cite[Sect. 6]{ABGK}, we 
proceed to show that indeed 
$\P(\mathcal{A}_n) \ge 1-C n^{-1}$ for some $C$
finite and all $n$. To this end, first 
analogously to \cite[Lemma 6.1]{ABGK}, we know that 
$\inf_n (\log n) \mathbb{P}_{(0,0)}(\mathcal{I}_n) 
\ge C$, for some $C$ positive and events 
\begin{align*}
\mathcal{I}_n:=\bigcap_{t \le n} \Big\{ R(t) \notin 
\{(-1,0), (0,0)\}  \;  \Big\}.
\end{align*}
Next, fixing $n$, 
similarly to \cite[Lemma 6.4]{ABGK} we call 
$m\in[n^{2\ep},n]$ a {\emph{tan time}} 
if 
$R[m-\lfloor n^\ep\rfloor,m)$ avoids 
$\{(-1,0),(0,0)\}+R(m)$, and 
$R[0,m-\lfloor n^\ep\rfloor]$ avoids $F+R(m)$, 
for the funnel
\begin{align*}
F:=\{(x,y):x\ge-1, |y|\le\log^3 (n\sqrt{x+2})\}.
\end{align*}
Equipped with this modification of tan time, 
it is easy to adapt the proof of  
\cite[Lemma 6.6]{ABGK}, yielding that 
the number of $n^\ep$-separated tan times 
within $[0,n]$, exceeds $n^{3/4-2\ep}$
with probability at least $1-C n^{-2}$. 
Then, following the proof of \cite[Lemma 6.9]{ABGK},
we deduce that under our coupling, for some $C$ finite, 
with probability at least $1-C n^{-1}$, whenever 
$m<n$ is a tan time for $\{R(t)\}$, at least 
one of $[m-n^\ep,m]$ must be a \abbr{super-non-nv} time 
for $\{E_t\}$. Consequently, with such probability
there are at least $n^{3/4-2\ep}$ \abbr{super-non-nv} 
times $m \in [0,n]$. It thus suffices to show that 
a uniformly bounded away fraction of these times 
has edge left of $E_m$ that is not in $\bD_0$.

\noindent
(a). We reveal whether each of the i.i.d. Bernoulli($p$) 
edges is in $\bD_0$ or not, only when our 
\abbr{fobt} walk $\{E_t\}$ first visits 
one of the two ends of that edge. Hence, ordering 
the \abbr{super-non-nv} times $m_1 < m_2 < \cdots$, 
since each \abbr{super-non-nv} 
time avoided both the \abbr{fobt} walk current position 
and the lattice site immediately to its left, we have not revealed 
up to time $m_k$ whether the edge left to $E_{m_k}$ is in $\bD_0$ 
or not. Thus, the joint law of events
$\{$edge left to $E_{m_k}$ is not in $\bD_0\}$ stochastically 
dominates the corresponding i.i.d. Bernoulli($1-p$) 
variables. It then
follows that 
with $\mathbb{P}_p$-probability at least $1-C n^{-1}$,
for more than $(1-p)/2$ of the first $n^{3/4-2\ep}$ 
\abbr{super-non-nv} times $m_k$, the edge left 
of $E_{m_k}$ is not in $\bD_0$, as claimed.

\noindent
(b). With $\{E_t, t \le n\} \subset [-n,n]^2$,   
taking $\ep>0$ small enough, our assumption that 
$k^{-r}|\bD_0\cap[-k,k]^2| \to 0$ for $r<3/4-2\ep$
implies that at least half of the edges left to locations
of the \abbr{fobt} walk at the first $n^{3/4-2\ep}$ 
\abbr{super-non-nv} times, are not in $\bD_0$, as claimed. 
\qed 

\medskip
\noindent
{\emph{Proof of Proposition \ref{general_soldier}.}} 
\newline
(a). For any integers $d,L \ge 2$, consider the stretched 
lattice $\L$, consisting of vertices  
\begin{align*}
\{ (y_1,...,y_d) \in \Z^d : \text{ at least one }y_i\text{ is integer multiple of } L\},
\end{align*}
and the edges of $\bZ^d$ between them (i.e. connecting pairs of 
vertices from $\L$ whose $\bZ^d$-distance is one).
Denoting by $\J:=(L \Z)^d$ the subset of junction sites in 
$\L$ 
and fixing $d \ge 3$, whenever $K_t$ visits a site $z$ of $\L$ for the first time, 
it dispenses one probe per adjacent closed edge of $\L$ (thereby 
using at most $(2d-1)$ probes if $z \in \J$ and at most one probe 
otherwise). The resulting \abbr{psrw} $\{K_t\}$ has the same law as 
the \abbr{srw} $\{B_t\}$ on the fixed graph $\L$ which is transient
(having finite effective resistance between $0$ and $\infty$). 
The number of steps $\tau_i$ it takes $\{B_t\}$ to travel from any 
junction site in $\J$ to one of its neighboring junction 
sites, are i.i.d. random variables whose mean being precisely 
the number of steps it takes the \abbr{srw} on $\Z$ to reach 
from $0$ to $\pm L$. Thus, $\E \tau_1$ is of
order $L^2$ (by diffusivity of the \abbr{srw} on $\Z$). 
Further, during such $\tau_i$ steps at most $2dL$ vertices of 
$\L$ are visited by our \abbr{psrw}, hence at most 
$(2d)^2L$ new probes are being used. Denoting by
$n(t)$ the number of visits made by $\{K_s, s \le t\}$
to the subset $\J$, recall that a.s.
$t^{-1} n(t) \to 1/\E \tau_1$
hence  
\begin{align*}
\overline{m}_t \le \; t^{-1} (2d)^2 L (n(t)+1) \,,
\end{align*}
is eventually bounded above by $c_d/L$
for some non-random $c_d$ finite and all $L$
(which for $L \to \infty$ is made as small as one wishes).
In case $d=2$, we modify the preceding construction by using our 
probes upon first visit of the \abbr{psrw} to sites $z \in \J$  
{\emph {only}} for opening
its right/up/down adjacent edges in $\L$.
The projection of the resulting \abbr{psrw} to the subset $\J$ of the 
stretched lattice $\L$ is then a lazy version of the 
\abbr{fobt} interaction model considered in Proposition \ref{biased_opening} (for $\bD_0=\{0\}$ and
having here probability $1-1/L$ of returing to the
current position before reaching an adjacent junction site). 
Since by Proposition \ref{biased_opening} the sample path of 
this \abbr{fobt} walk on $\J$ is a.s. transient, the same 
applies for our \abbr{psrw}, while by the preceding reasoning  
$\overline{m}_t$ is made as small as one wishes upon
choosing $L \to \infty$.

As for the stated sample path recurrence, fix $d,M \ge 1$ and the 
one-dimensional subspace $\mathbb{O}:=\bZ\times\{0\}\times...\times\{0\}$ of $\bZ^d$. Here we use $2M$ probes each time step, 
placing these on the edges within $\mathbb{O}$ that are 
adjacent to the currently symmetric open interval 
$\bD_t \subset \mathbb{O}$. The resulting \abbr{psrw} 
has $\overline{m}_t=2M$ as large as we wish and 
merely follows the path of the recurrent one-dimensional 
\abbr{srw} on $\mathbb{O}$. 

\noindent
(b). Here a probe emitted at time $t$ follows 
a \abbr{srw} on $\bD_t$, starting from the current 
position $K_t$ of our \abbr{psrw}. Fixing 
$d \ge 2$, we now opt to release at 
the first visit of our \abbr{psrw} to each site 
$z \in\mathbb{Z}^d$, the $\mathcal{F}_t$-adapted
minimal number of probes $m(t)$ required for 
opening all $2d$ edges of $\Z^d$ adjacent to $z$
(i.e. $m(t)=\inf\{s \ge 1: \B^{\Z^d}(z,1) \subseteq \D_{t+s}\}$). It results with the sample path of 
$\{K_t\}$ matching that of a \abbr{srw} on
$\Z^d$, which is thereby a.s. transient when $d \ge 3$
and a.s. recurrent when $d=2$.
Further, the sequence $\{m(t)\}$ of probe counts is 
stochastically dominated by the i.i.d. 
variables $\xi_t-1$, with $\xi_1$ following
the $2d$ coupon collector distribution (i.e. 
the number of independent, uniform samples 
from among $2d$ distinct coupons one needs
for possessing a complete set). Thus, by 
the \abbr{slln}, almost surely,
\begin{align*}
\limsup_{t\rightarrow\infty}
\overline{m}_t \le \E \xi_1 - 1 = c_d 
\end{align*}
(for $c_d :=2d \sum_{\ell=1}^{2d-1} \ell^{-1}$). 
The same transience/recurrence holds 
even when extra $M$ probes are emitted 
at {\emph{each}} step of the \abbr{psrw} 
(yielding $\overline{m}_t\ge M$ 
arbitrarily large). 
\qed

\vskip 5pt

\noindent
{\bf Acknowledgment} 
We thank G. Kozma and J. Ding for the insight which led 
us to part (a) of Proposition \ref{general_soldier}. We 
further thank G. Kozma for sharing with us his 
unpublished preprints \cite{ABGK,K2}, and Stanford's
Mathematics Research Center for the financial support 
of visit by V.S., during which part of this work was done.

\end{document}